\newtheorem{theorem}{Theorem}
\theoremstyle{plain}
\newtheorem{corollary}{Corollary}
\newtheorem{definition}{Definition}
\newtheorem{lemma}{Lemma}
\newtheorem{proposition}{Proposition}
\newtheorem{remark}{Remark}
\numberwithin{equation}{section}
\begin{document}
\title[The Hermite -Hadamard's inequalities for preinvex functions]{On
Hermite-Hadamard Type Integral Inequalities for preinvex and log-preinvex
functions }
\author{Mehmet Zeki SARIKAYA}
\address{Department of Mathematics, \ Faculty of Science and Arts, D\"{u}zce
University, D\"{u}zce-TURKEY}
\email{sarikayamz@gmail.com}
\author{Hakan Bozkurt}
\email{insedi@yahoo.com}
\author{Necmettin Alp}
\email{placenn@gmail.com}
\subjclass[2000]{ 26D07, 26D10, 26D99}
\keywords{Hermite-Hadamard's inequalities, non-convex functions, invex sets,
H\"{o}lder's inequality.}

\begin{abstract}
In this paper, we extend some estimates of the left hand side of a Hermite-
Hadamard type inequality for nonconvex functions whose derivatives absolute
values are preinvex and log-preinvex.
\end{abstract}

\maketitle

\section{Introduction}

The following inequality is well-known in the literature as Hermite-Hadamard
inequality: Let $f:I\subset 
\mathbb{R}
\rightarrow 
\mathbb{R}
$ be a convex function on an interval $I$ of real numbers and $a,b\in I$
with $a<b$. Then the following holds%
\begin{equation}
f\left( \frac{a+b}{2}\right) \leq \frac{1}{b-a}\int\limits_{a}^{b}f\left(
x\right) dx\leq \frac{f\left( a\right) +f\left( b\right) }{2}.  \label{h}
\end{equation}%
Both inequalities hold in the reversed direction if the function $f$ is
concave.

The inequalities (\ref{h}) have become an important cornerstone in
mathematical analysis and optimization and many uses of these inequalities
have been discovered in a variety of settings. Recently, Hermite-Hadamard
type inequality has been the subject of intensive research. For recent
results, refinements, counterparts, generalizations and new Hadamard's-type
inequalities, see (\cite{SSDRPA}, \cite{Dragomir2}, \cite{USK}-\cite{CEMPJP}%
, \cite{sarikaya}-\cite{sarikaya5}).

In \cite{USK} some inequalities of Hermite-Hadamard type for differentiable
convex mappings connected with the left part of (\ref{h}) were proved using
the following lemma:

\begin{lemma}
\label{l1} Let $f:I^{\circ }\subset \mathbb{R}\rightarrow \mathbb{R}$, be a
differentiable mapping on $I^{\circ }$, $a,b\in I^{\circ }$ ($I^{\circ }$ is
the interior of $I$) with $a<b$. If $f^{\prime }\in L\left( \left[ a,b\right]
\right) $, then we have%
\begin{equation}
\begin{array}{l}
\dfrac{1}{b-a}\dint_{a}^{b}f(x)dx-f\left( \dfrac{a+b}{2}\right) \\ 
\\ 
\ \ \ \ \ =\left( b-a\right) \left[ \dint_{0}^{\frac{1}{2}}tf^{\prime
}(ta+(1-t)b)dt+\dint_{\frac{1}{2}}^{1}\left( t-1\right) f^{\prime
}(ta+(1-t)b)dt\right] .%
\end{array}
\label{HH}
\end{equation}
\end{lemma}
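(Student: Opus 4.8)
The plan is to start from the right-hand side of (\ref{HH}) and reduce it to the left-hand side by integrating each of the two integrals by parts, followed by a single change of variables. Write
$I_{1}=\int_{0}^{1/2}tf^{\prime }(ta+(1-t)b)\,dt$ and $I_{2}=\int_{1/2}^{1}(t-1)f^{\prime }(ta+(1-t)b)\,dt$, and observe that $\frac{d}{dt}f(ta+(1-t)b)=(a-b)f^{\prime }(ta+(1-t)b)$, so that $\frac{1}{a-b}f(ta+(1-t)b)$ is an antiderivative of the factor $f^{\prime }(ta+(1-t)b)$ appearing in both integrals.

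First I would treat $I_{1}$ with $u=t$ and $dv=f^{\prime }(ta+(1-t)b)\,dt$: the boundary term vanishes at $t=0$ and contributes $\frac{1}{2(a-b)}f\!\left(\frac{a+b}{2}\right)$ at $t=\frac12$, leaving $I_{1}=\frac{1}{2(a-b)}f\!\left(\frac{a+b}{2}\right)-\frac{1}{a-b}\int_{0}^{1/2}f(ta+(1-t)b)\,dt$. Then I would treat $I_{2}$ with $u=t-1$ and the same $dv$: now the boundary term vanishes at $t=1$ and again contributes $\frac{1}{2(a-b)}f\!\left(\frac{a+b}{2}\right)$ at $t=\frac12$, leaving $I_{2}=\frac{1}{2(a-b)}f\!\left(\frac{a+b}{2}\right)-\frac{1}{a-b}\int_{1/2}^{1}f(ta+(1-t)b)\,dt$. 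Adding the two identities gives $I_{1}+I_{2}=\frac{1}{a-b}f\!\left(\frac{a+b}{2}\right)-\frac{1}{a-b}\int_{0}^{1}f(ta+(1-t)b)\,dt$.

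Finally I would apply the substitution $x=ta+(1-t)b$, so that $dx=(a-b)\,dt$, with $t=0\mapsto x=b$ and $t=1\mapsto x=a$; this yields $\int_{0}^{1}f(ta+(1-t)b)\,dt=\frac{1}{b-a}\int_{a}^{b}f(x)\,dx$. Substituting back and multiplying through by $b-a$ produces $(b-a)(I_{1}+I_{2})=\frac{1}{b-a}\int_{a}^{b}f(x)\,dx-f\!\left(\frac{a+b}{2}\right)$, which is precisely (\ref{HH}).

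The argument is entirely elementary; the only point demanding care is the bookkeeping of signs — in particular keeping $\frac{1}{a-b}$ and $\frac{1}{b-a}$ straight and evaluating the boundary terms of the two integrations by parts so that the two copies of $\frac{1}{2(a-b)}f\!\left(\frac{a+b}{2}\right)$ reinforce rather than cancel. One should also note that the hypothesis ``$f$ differentiable on $I^{\circ }$ with $f^{\prime }\in L([a,b])$'' is exactly what is needed to justify the integration-by-parts step.
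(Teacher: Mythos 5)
Your proof is correct: the integration by parts of each of $I_{1}$ and $I_{2}$ with antiderivative $\tfrac{1}{a-b}f(ta+(1-t)b)$, the two boundary contributions at $t=\tfrac12$, and the final substitution $x=ta+(1-t)b$ all check out, with the signs handled properly. This is essentially the same approach used in the paper, which quotes the lemma from \cite{USK} but carries out the identical integration-by-parts computation (in the $\eta$-generalized form) as identity (\ref{5}) in the proof of Theorem \ref{tt3}.
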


One more general result related to (\ref{HH}) was established in \cite%
{USKMEO}. The main result in \cite{USK} is as follows:

\begin{theorem}
\label{t1} Let $f:I\subset \mathbb{R}\rightarrow \mathbb{R}$, be a
differentiable mapping on $I^{\circ }$, $a,b\in I$ with $a<b$. If the
mapping $\left\vert f^{\prime }\right\vert $ is convex on $\left[ a,b\right] 
$, then%
\begin{equation}
\left\vert \frac{1}{b-a}\int_{a}^{b}f(x)dx-f\left( \frac{a+b}{2}\right)
\right\vert \leq \frac{b-a}{4}\left( \frac{\left\vert f^{\prime
}(a)\right\vert +\left\vert f^{\prime }(b)\right\vert }{2}\right) .
\label{H1}
\end{equation}
\end{theorem}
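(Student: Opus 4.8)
The plan is to derive \eqref{H1} directly from the identity \eqref{HH} of Lemma \ref{l1}. First I would take absolute values of both sides of \eqref{HH} and apply the triangle inequality together with the fact that $\left\vert t-1\right\vert =1-t$ for $t\in\left[\frac12,1\right]$, obtaining
\begin{equation*}
\left\vert \frac{1}{b-a}\int_{a}^{b}f(x)dx-f\left( \frac{a+b}{2}\right)\right\vert \leq \left( b-a\right) \left[ \int_{0}^{\frac12} t\left\vert f^{\prime}(ta+(1-t)b)\right\vert dt+\int_{\frac12}^{1}(1-t)\left\vert f^{\prime}(ta+(1-t)b)\right\vert dt\right].
\end{equation*}

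Next I would invoke the hypothesis that $\left\vert f^{\prime }\right\vert$ is convex on $\left[a,b\right]$, which gives the pointwise bound $\left\vert f^{\prime}(ta+(1-t)b)\right\vert \leq t\left\vert f^{\prime}(a)\right\vert+(1-t)\left\vert f^{\prime}(b)\right\vert$ for every $t\in\left[0,1\right]$. Substituting this into each of the two integrals reduces the problem to evaluating the elementary polynomial integrals $\int_{0}^{1/2} t\bigl(t\left\vert f^{\prime}(a)\right\vert+(1-t)\left\vert f^{\prime}(b)\right\vert\bigr)\,dt$ and $\int_{1/2}^{1}(1-t)\bigl(t\left\vert f^{\prime}(a)\right\vert+(1-t)\left\vert f^{\prime}(b)\right\vert\bigr)\,dt$. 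A short computation of $\int_{0}^{1/2} t^{2}\,dt$, $\int_{0}^{1/2} t(1-t)\,dt$, and the analogous integrals over $\left[\frac12,1\right]$ should show that the coefficients of $\left\vert f^{\prime}(a)\right\vert$ and of $\left\vert f^{\prime}(b)\right\vert$ are each equal to $\tfrac{1}{8}$, so that the bracketed quantity collapses to $\tfrac{1}{8}\bigl(\left\vert f^{\prime}(a)\right\vert+\left\vert f^{\prime}(b)\right\vert\bigr)$.

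Finally I would multiply back by $b-a$ and rewrite $\tfrac{b-a}{8}\bigl(\left\vert f^{\prime}(a)\right\vert+\left\vert f^{\prime}(b)\right\vert\bigr)=\tfrac{b-a}{4}\cdot\tfrac{\left\vert f^{\prime}(a)\right\vert+\left\vert f^{\prime}(b)\right\vert}{2}$, which is exactly \eqref{H1}. There is no serious obstacle here: the argument is a routine combination of the triangle inequality, the convexity estimate, and the evaluation of a few quadratic integrals; the only point requiring care is bookkeeping — correctly splitting the range at $t=\tfrac12$, handling the factor $\left\vert t-1\right\vert$, and checking that the symmetric contributions indeed produce the common coefficient $\tfrac18$.
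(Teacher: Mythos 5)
Your proof is correct, and it is essentially the standard argument: the paper itself does not reprove Theorem \ref{t1} (it is quoted from \cite{USK}), but your route — the identity of Lemma \ref{l1}, the triangle inequality, the convexity bound $\left\vert f^{\prime}(ta+(1-t)b)\right\vert \leq t\left\vert f^{\prime}(a)\right\vert+(1-t)\left\vert f^{\prime}(b)\right\vert$, and the quadratic integrals summing to the coefficient $\tfrac18$ — is exactly the method the paper uses for its preinvex generalization (Theorem \ref{tt3}), which reduces to your argument when $\eta(b,a)=b-a$. The only wording to tighten is that the coefficient $\tfrac18$ for each of $\left\vert f^{\prime}(a)\right\vert$ and $\left\vert f^{\prime}(b)\right\vert$ arises only after adding the contributions of the two integrals ($\tfrac1{24}+\tfrac1{12}$), not within each integral separately.
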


It is well known that convexity has been playing a key role in mathematical
programming, engineering, and optimization theory. The generalization of
convexity is one of the most important aspects in mathematical programming
and optimization theory. There have been many attempts to weaken the
convexity assumptions in the literature, (see, \cite{SSDRPA}, \cite%
{Dragomir2}, \cite{USK}-\cite{CEMPJP}, \cite{sarikaya}-\cite{sarikaya5}). A
significant generalization of convex functions is that of invex functions
introduced by Hanson in \cite{Hanson}. Ben-Israel and Mond \cite{IsraelMond}
introduced the concept of preinvex functions, which is a special case of
invexity. Pini \cite{Pini} introduced the concept of prequasiinvex functions
as a generalization of invex functions. Noor \cite{Noor0}-\cite{Noor2} has
established some Hermite-Hadamard type inequalities for preinvex and
log-preinvex functions. In recent papers Barani, Ghazanfari, and Dragomir in 
\cite{BGDragomir} presented some estimates of the right hand side of a
Hermite- Hadamard type inequality in which some preinvex functions are
involved. His class of nonconvex functions include the classical convex
functions and its various classes as special cases. For some recent results
related to this nonconvex functions, see the papers (\cite{Yangdili}-\cite%
{Noor2}, \cite{Hanson}-\cite{Pini}).

\section{Preliminaries}

Let $f:K\rightarrow 
\mathbb{R}
$, and $\eta (.,.):K\times K\rightarrow 
\mathbb{R}
$ , where $K$ is a nonempty closed set in $%
\mathbb{R}
^{n}$, be continuous functions. First of all, we recall the following well
known results and concepts, see \cite{Yangdili}-\cite{Noor2} \cite{mohan}
and the references theirin

\begin{definition}
\label{d1} Let $u\in K$. Then the set $K$ is said to be invex at $u$ with
respect to $\eta (.,.)$, if%
\begin{equation*}
u+t\eta (v,u)\in K,\text{ }\forall u,v\in K,\text{ }t\in \left[ 0,1\right] .
\end{equation*}%
$K$ is said to be an invex set with respect to $\eta $, if $K$ is invex at
each $u\in K$. The invex set $K$ is also called $\eta $-connected set.

\begin{remark}
\label{r1} We would like to mention that the Definition \ref{d1} of an invex
set has a clear geometric interpretation. This definition essentially says
that there is a path starting from a point $u$ which is contained in $K$. We
do not require that the point $v$ should be one of the end points of the
path. This observation plays an important role in our analysis. Note that,
if we demand that $v$ should be an end point of the path for every pair of
points, $u,v\in K$, then $\eta (v,u)=v-u$ and consequently invexity reduces
to convexity. Thus, it is true that every convex set is also an \.{I}nvex
set with respect to $\eta (v,u)=v-u$, but the converse is not necessarily
true, see \cite{Yangdili}-\cite{Noor2} and the references therein.
\end{remark}
\end{definition}

\begin{definition}
\label{d2} The function $f$ on the invex set $K$ is said to be preinvex with
respect to $\eta $, if%
\begin{equation*}
f\left( u+t\eta (v,u)\right) \leq \left( 1-t\right) f\left( u\right)
+tf\left( v\right) ,\text{ }\forall u,v\in K,\text{ }t\in \left[ 0,1\right] .
\end{equation*}%
The function $f$ is said to be preconcave if and only if $-f$ is preinvex.
Note that every convex function is an preinvex function, but the converse is
not true.
\end{definition}

\begin{definition}
\label{d3} The function $f$ on the invex set $K$ is said to be logarithmic
preinvex with respect to $\eta $, such that%
\begin{equation*}
f\left( u+t\eta (v,u)\right) \leq \left( f\left( u\right) \right)
^{1-t}\left( f\left( v\right) \right) ^{t},\text{ }u,v\in K,\text{ }t\in %
\left[ 0,1\right]
\end{equation*}%
where $f\left( .\right) >0$.
\end{definition}

Now we define a new definition for prequasiinvex functions as follows:

\begin{definition}
\label{d4} The function $f$ on the invex set $K$ is said to be prequasiinvex
with respect to $\eta $, if%
\begin{equation*}
f\left( u+t\eta (v,u)\right) \leq \max \left\{ f\left( u\right) ,f\left(
v\right) \right\} ,\text{ }u,v\in K,\text{ }t\in \left[ 0,1\right] .
\end{equation*}
\end{definition}

From the above definitions, we have%
\begin{eqnarray*}
f\left( u+t\eta (v,u)\right)  &\leq &\left( f\left( u\right) \right)
^{1-t}\left( f\left( v\right) \right) ^{t} \\
&\leq &\left( 1-t\right) f\left( u\right) +tf\left( v\right)  \\
&\leq &\max \left\{ f\left( u\right) ,f\left( v\right) \right\} .
\end{eqnarray*}%
We also need the following assumption regarding the function $\eta $ which
is due to Mohan and Neogy \cite{mohan}:

\textbf{Condition C} Let $K\subseteq 
\mathbb{R}
$ be an open invex subset with respect to $\eta :K\times K\rightarrow 
\mathbb{R}
$. For any $x,y\in K$ and any $t\in \left[ 0,1\right] ,$%
\begin{eqnarray*}
\eta (y,y+t\eta (x,y)) &=&-t\eta (x,y) \\
\eta (x,y+t\eta (x,y)) &=&\left( 1-t\right) \eta (x,y).
\end{eqnarray*}%
Note that for every $x,y\in K$ and every $t_{1},t_{2}\in \left[ 0,1\right] $
from Condition C, we have%
\begin{equation}
\eta (y+t_{2}\eta (x,y),y+t_{1}\eta (x,y))=\left( t_{2}-t_{1}\right) \eta
(x,y).  \label{c}
\end{equation}%
In \cite{Noor0},\ Noor proved the Hermite-Hadamard inequality for the
preinvex functions as follows:

\begin{theorem}
\label{t5} Let $f:K=\left[ a,a+\eta (b,a)\right] \rightarrow \left( 0,\infty
\right) $ be an preinvex\ function on the interval of real numbers $K^{0}$
(the interior of $K$) and $a,b\in K^{0}$ with $a<a+\eta (b,a)$. Then the
following inequality holds:%
\begin{equation}
f\left( \frac{2a+\eta (b,a)}{2}\right) \leq \frac{1}{\eta (b,a)}%
\dint\limits_{a}^{a+\eta (b,a)}f\left( x\right) dx\leq \frac{f(a)+f(a+\eta
(b,a))}{2}\leq \frac{f\left( a\right) +f\left( b\right) }{2}.  \label{p}
\end{equation}
\end{theorem}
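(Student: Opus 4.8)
The plan is to obtain the four-term chain in (\ref{p}) one link at a time, in each case combining the defining inequality of preinvexity (Definition \ref{d2}) with Condition C — used exactly as in Noor's argument — and then converting integrals over $\left[ a,a+\eta (b,a)\right] $ into integrals over $\left[ 0,1\right] $ via the substitution $x=a+t\eta (b,a)$, which is legitimate because $\eta (b,a)=\left( a+\eta (b,a)\right) -a>0$.

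For the first inequality I would fix $t\in \left[ 0,1\right] $ and set $u=a+(1-t)\eta (b,a)$ and $v=a+t\eta (b,a)$, both of which lie in $K$ by invexity. Identity (\ref{c}) applied with $x=b$, $y=a$, $t_{2}=t$, $t_{1}=1-t$ gives $\eta (v,u)=(2t-1)\eta (b,a)$, hence $u+\frac{1}{2}\eta (v,u)=a+\frac{1}{2}\eta (b,a)=\frac{2a+\eta (b,a)}{2}$. Preinvexity of $f$ with parameter $\frac{1}{2}$ then yields
\[
f\left( \frac{2a+\eta (b,a)}{2}\right) \leq \frac{1}{2}f\left( a+(1-t)\eta (b,a)\right) +\frac{1}{2}f\left( a+t\eta (b,a)\right) .
\]
Integrating this in $t$ over $\left[ 0,1\right] $ and using the substitutions $x=a+(1-t)\eta (b,a)$ and $x=a+t\eta (b,a)$ in the two resulting integrals — each of which equals $\frac{1}{\eta (b,a)}\int_{a}^{a+\eta (b,a)}f(x)\,dx$ — produces the left-hand inequality.

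For the second inequality I would again write a generic point of $\left[ a,a+\eta (b,a)\right] $ as $a+t\eta (b,a)$ with $t\in \left[ 0,1\right] $. The first identity of Condition C, with its parameter set to $1$, gives $\eta \left( a,a+\eta (b,a)\right) =-\eta (b,a)$, so that $a+t\eta (b,a)=\left( a+\eta (b,a)\right) +(1-t)\,\eta \left( a,a+\eta (b,a)\right) $. Applying preinvexity of $f$ with base point $u=a+\eta (b,a)$, second point $v=a$, and parameter $1-t$ then gives $f\left( a+t\eta (b,a)\right) \leq (1-t)f(a)+t\,f\left( a+\eta (b,a)\right) $. Integrating over $t\in \left[ 0,1\right] $ and substituting $x=a+t\eta (b,a)$ on the left yields
\[
\frac{1}{\eta (b,a)}\int_{a}^{a+\eta (b,a)}f(x)\,dx\leq \frac{f(a)+f(a+\eta (b,a))}{2}.
\]
Finally, the last inequality is immediate from preinvexity with $u=a$, $v=b$, $t=1$, which reads $f\left( a+\eta (b,a)\right) \leq f(b)$.

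I expect the only delicate point to be the bookkeeping with Condition C. Its role is precisely the following: the ``segment'' $\{a+t\eta (b,a):t\in \left[ 0,1\right] \}$ is by definition an $\eta $-path issuing from $a$, and Condition C (together with its corollary (\ref{c})) is exactly what lets us view the very same segment as an $\eta $-path issuing from the other endpoint $a+\eta (b,a)$, and from any of its interior points; once that is available, every remaining step is a one-variable change of variables or a direct appeal to Definition \ref{d2}. One should also note that the positivity hypothesis $f>0$ is not used in establishing (\ref{p}) — it becomes relevant only for the log-preinvex refinements — and that $a+\eta (b,a)\in K$, so that $f\left( a+\eta (b,a)\right) $ is well defined.
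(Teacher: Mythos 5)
The paper itself does not prove Theorem \ref{t5}: it is quoted from Noor \cite{Noor0}, so there is no in-paper argument to compare yours against. Your reconstruction is correct and is essentially the standard one: the outer estimate and the bound $f\left(a+\eta(b,a)\right)\leq f(b)$ come straight from Definition \ref{d2} with $t=1$, while the midpoint inequality and the middle term are obtained by re-parametrizing the same $\eta$-path from either endpoint, which is exactly where Condition C, through the identity (\ref{c}), enters. One point worth making explicit: as printed, the theorem does not list Condition C among its hypotheses, yet your (and Noor's) derivation of the first two inequalities in (\ref{p}) genuinely needs it --- without some such assumption on $\eta$ the midpoint bound need not hold --- so your closing remarks, that Condition C does the real bookkeeping and that the positivity of $f$ is never used in (\ref{p}), are exactly right.
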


In \cite{BGDragomir}, Barani, Gahazanfari and Dragomir proved the following
theorems.

\begin{theorem}
\label{t6} Let $A\subseteq 
\mathbb{R}
$ be an open invex subset with respect to $\eta :A\times A\rightarrow 
\mathbb{R}
$. Suppose that $f:A\rightarrow 
\mathbb{R}
$ is a diferentiable function.Assume $p\in 
\mathbb{R}
$ with $p>1$. If $\left\vert f^{\prime }\right\vert ^{\frac{p}{p-1}}$ is
prequasiinvex on $A$ then, for every $a,b\in A$ the following inequality
holds%
\begin{eqnarray*}
&&\left\vert \frac{f(a)+f(a+\eta (b,a))}{2}-\frac{1}{\eta (a,b)}%
\int_{b}^{b+\eta (b,a)}f(x)dx\right\vert  \\
&& \\
&\leq &\frac{\eta (b,a)}{2(p+1)^{\frac{1}{p}}}\left[ \sup \left\{ \left\vert
f^{\prime }\left( a\right) \right\vert ^{\frac{p}{p-1}},\left\vert f^{\prime
}\left( b\right) \right\vert ^{\frac{p}{p-1}}\right\} \right] ^{\frac{p}{p-1}%
}
\end{eqnarray*}
\end{theorem}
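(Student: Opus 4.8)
The plan is to obtain the estimate from H\"older's inequality applied to an integral representation of the left-hand quantity, in the same spirit as the convex estimate of Theorem \ref{t1}, but with the segment $[a,b]$ replaced by the $\eta$-path $t\mapsto a+t\eta(b,a)$, $t\in[0,1]$. The first step is to establish the invex analogue of the identity in Lemma \ref{l1}: for $f$ differentiable on $[a,a+\eta(b,a)]\subseteq A$, the change of variable $x=a+t\eta(b,a)$ followed by one integration by parts gives
\begin{equation*}
\frac{f(a)+f(a+\eta(b,a))}{2}-\frac{1}{\eta(b,a)}\int_{a}^{a+\eta(b,a)}f(x)\,dx=\frac{\eta(b,a)}{2}\int_{0}^{1}(2t-1)\,f'(a+t\eta(b,a))\,dt .
\end{equation*}
Only the fact that the whole $\eta$-path lies in $A$ (so that $f$ is differentiable along it) is used here; Condition C is not needed.

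With $f'$ isolated along the path, I would take absolute values under the integral sign, bounding the left-hand side by $\tfrac{\eta(b,a)}{2}\int_{0}^{1}|2t-1|\,\bigl|f'(a+t\eta(b,a))\bigr|\,dt$, and then apply H\"older's inequality with conjugate exponents $p$ and $q=\tfrac{p}{p-1}$:
\begin{equation*}
\int_{0}^{1}|2t-1|\,\bigl|f'(a+t\eta(b,a))\bigr|\,dt\leq\left(\int_{0}^{1}|2t-1|^{p}\,dt\right)^{1/p}\left(\int_{0}^{1}\bigl|f'(a+t\eta(b,a))\bigr|^{\frac{p}{p-1}}dt\right)^{\frac{p-1}{p}} .
\end{equation*}
The first factor is elementary: splitting the integral at $t=\tfrac12$ and substituting gives $\int_{0}^{1}|2t-1|^{p}\,dt=\tfrac{1}{p+1}$, so that factor equals $(p+1)^{-1/p}$.

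For the second factor I would use the hypothesis that $\left\vert f^{\prime }\right\vert ^{p/(p-1)}$ is prequasiinvex with respect to $\eta$: by Definition \ref{d4} (with $u=a$, $v=b$), for every $t\in[0,1]$,
\begin{equation*}
\bigl|f'(a+t\eta(b,a))\bigr|^{\frac{p}{p-1}}\leq\max\left\{\left\vert f'(a)\right\vert ^{\frac{p}{p-1}},\left\vert f'(b)\right\vert ^{\frac{p}{p-1}}\right\}=\sup\left\{\left\vert f'(a)\right\vert ^{\frac{p}{p-1}},\left\vert f'(b)\right\vert ^{\frac{p}{p-1}}\right\},
\end{equation*}
hence $\int_{0}^{1}\bigl|f'(a+t\eta(b,a))\bigr|^{p/(p-1)}dt$ does not exceed this supremum, and raising to the power $\tfrac{p-1}{p}$ bounds the second H\"older factor by $\bigl[\sup\{\dots\}\bigr]^{(p-1)/p}$. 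Combining the three displays yields
\begin{equation*}
\left\vert \frac{f(a)+f(a+\eta(b,a))}{2}-\frac{1}{\eta(b,a)}\int_{a}^{a+\eta(b,a)}f(x)\,dx\right\vert \leq\frac{\eta(b,a)}{2(p+1)^{1/p}}\left[\sup\left\{\left\vert f'(a)\right\vert ^{\frac{p}{p-1}},\left\vert f'(b)\right\vert ^{\frac{p}{p-1}}\right\}\right]^{\frac{p-1}{p}},
\end{equation*}
which is the asserted bound (with the natural exponent $\tfrac{p-1}{p}$ on the supremum).

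I do not anticipate a real difficulty: once the identity of the first step is available, the argument reduces to the two-line H\"older estimate and the elementary evaluation of $\int_0^1|2t-1|^p\,dt$. The step that needs care is that identity itself --- checking the integration by parts, making sure the path $a+t\eta(b,a)$ stays inside the open invex set $A$ so that every integral is legitimate, and reconciling the integration limits and normalising constant with those displayed in the statement.
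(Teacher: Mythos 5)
Your argument is correct, but note that there is no internal proof to compare it with: the paper only quotes Theorem \ref{t6} from \cite{BGDragomir} in its preliminaries and proves nothing about it. Your route is the standard one for this trapezoid-side estimate: the identity $\frac{f(a)+f(a+\eta (b,a))}{2}-\frac{1}{\eta (b,a)}\int_{a}^{a+\eta (b,a)}f(x)\,dx=\frac{\eta (b,a)}{2}\int_{0}^{1}(2t-1)f^{\prime }(a+t\eta (b,a))\,dt$ (valid by parts, since invexity of $A$ keeps the whole path $a+t\eta (b,a)$ inside $A$), then H\"{o}lder with exponents $p$ and $\frac{p}{p-1}$, the evaluation $\int_{0}^{1}\left\vert 2t-1\right\vert ^{p}dt=\frac{1}{p+1}$, and finally Definition \ref{d4} with $u=a$, $v=b$ to bound the second H\"{o}lder factor by the supremum; each of these steps checks out, and your observation that Condition C is not needed is also accurate.

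Two remarks. First, what you proved is the corrected form of the statement rather than the literal one: the subtracted term must be $\frac{1}{\eta (b,a)}\int_{a}^{a+\eta (b,a)}f(x)\,dx$ (not $\frac{1}{\eta (a,b)}\int_{b}^{b+\eta (b,a)}f(x)\,dx$), and the outer exponent on the supremum must be $\frac{p-1}{p}$, exactly as your H\"{o}lder computation produces. The printed exponent $\frac{p}{p-1}$ cannot be right: replacing $f$ by $\lambda f$ scales the left-hand side like $\lambda$ and the printed right-hand side like $\lambda ^{\left( p/(p-1)\right) ^{2}}$, so the printed inequality fails for small $\lambda >0$ whenever the left-hand side is nonzero. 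So your ``natural exponent'' reading is the only tenable one, and it agrees with the source result in \cite{BGDragomir}. Second, for orientation within this paper: the authors' own new results (Theorems \ref{tt3}--\ref{tt7}) concern the midpoint side of the Hermite--Hadamard inequality and therefore rest on the split-kernel identity (\ref{5}) with kernels $t$ on $[0,\frac{1}{2}]$ and $t-1$ on $[\frac{1}{2},1]$, whereas your kernel $2t-1$ on $[0,1]$ is the appropriate analogue of Lemma \ref{l1} for the trapezoid side treated in Theorem \ref{t6}.
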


\begin{theorem}
\label{t7} Let $A\subseteq 
\mathbb{R}
$ be an open invex subset with respect to $\eta :A\times A\rightarrow 
\mathbb{R}
$. Suppose that $f:A\rightarrow 
\mathbb{R}
$ is a diferentiable function. If $\left\vert f^{\prime }\right\vert $ is
prequasiinvex on $A$ then, for every $a,b\in A$ the following inequality
holds%
\begin{eqnarray*}
&&\left\vert \frac{f(a)+f(a+\eta (b,a))}{2}-\frac{1}{\eta (a,b)}%
\int_{b}^{b+\eta (b,a)}f(x)dx\right\vert  \\
&& \\
&\leq &\frac{\eta (b,a)}{4}\max \left\{ \left\vert f^{\prime }\left(
a\right) \right\vert ,\left\vert f^{\prime }\left( b\right) \right\vert
\right\} 
\end{eqnarray*}
\end{theorem}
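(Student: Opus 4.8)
The plan is to transport the proof of Theorem \ref{t1} to the invex setting. (I read the integral in the statement as $\frac{1}{\eta(b,a)}\int_{a}^{a+\eta(b,a)}f(x)\,dx$, which is what makes the two-sided inequality \eqref{p} meaningful; the ``$\eta(a,b)$'' in the denominator and the lower limit ``$b$'' appear to be typographical slips, exactly as in Theorem \ref{t6}.)

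The first step is an integral identity playing, for the \emph{right} endpoint of \eqref{p}, the role that Lemma \ref{l1} plays for the left endpoint: for a differentiable $f$ on the open invex set $A$ and $a,b\in A$ with $\eta(b,a)\neq 0$,
\[
\frac{f(a)+f(a+\eta(b,a))}{2}-\frac{1}{\eta(b,a)}\int_{a}^{a+\eta(b,a)}f(x)\,dx
=\frac{\eta(b,a)}{2}\int_{0}^{1}(2t-1)\,f'\bigl(a+t\eta(b,a)\bigr)\,dt .
\]
I would prove it by the change of variables $x=a+t\eta(b,a)$, which turns the averaged integral into $\int_{0}^{1}f(a+t\eta(b,a))\,dt$, followed by one integration by parts in $t$ against the weight $2t-1$, exactly as in the derivation of \eqref{HH}; the boundary terms collapse onto $f(a)$ and $f(a+\eta(b,a))$ because the path starts at $a$ and ends at $a+\eta(b,a)$, and Condition~C --- through \eqref{c} --- certifies that $a+t\eta(b,a)\in A$ for every $t\in[0,1]$, so $f'$ may legitimately be evaluated along the whole path.

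With the identity in hand, I would take absolute values and apply the triangle inequality under the integral sign:
\[
\left|\frac{f(a)+f(a+\eta(b,a))}{2}-\frac{1}{\eta(b,a)}\int_{a}^{a+\eta(b,a)}f(x)\,dx\right|
\le\frac{\eta(b,a)}{2}\int_{0}^{1}|2t-1|\;\bigl|f'\bigl(a+t\eta(b,a)\bigr)\bigr|\,dt .
\]
The crucial structural input is that for each fixed $t\in[0,1]$ the argument $a+t\eta(b,a)$ is precisely of the form $u+t\eta(v,u)$ with $u=a$ and $v=b$, so prequasiinvexity of $|f'|$ (Definition \ref{d4} applied to $|f'|$) gives the $t$-free bound $\bigl|f'(a+t\eta(b,a))\bigr|\le\max\{|f'(a)|,|f'(b)|\}$. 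Pulling this constant out of the integral and using the elementary value $\int_{0}^{1}|2t-1|\,dt=\tfrac12$ (split the integral at $t=\tfrac12$), the right-hand side becomes $\frac{\eta(b,a)}{2}\cdot\frac12\cdot\max\{|f'(a)|,|f'(b)|\}=\frac{\eta(b,a)}{4}\max\{|f'(a)|,|f'(b)|\}$, which is the claimed bound.

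I expect the only real obstacle to be the bookkeeping in the identity of the first step --- verifying that the substitution over the invex interval reduces to the ordinary linear one and that Condition~C is invoked correctly so the path remains in $A$. Once that is in place, the second step is a three-line computation: triangle inequality, the $t$-independent prequasiinvexity bound, and the integral $\int_{0}^{1}|2t-1|\,dt=\tfrac12$. Unlike Theorem \ref{t6}, no Hölder inequality is needed, since the relevant exponent is $1$ and the maximum can be extracted directly.
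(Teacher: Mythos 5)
Your proof is correct. One caveat about the comparison: the paper itself does not prove Theorem \ref{t7} at all --- it is quoted, without proof, from \cite{BGDragomir} --- so the only in-paper analogue is the proof of Theorem \ref{tt3}, which runs on the same mechanism as yours (an integration-by-parts identity along the path $t\mapsto a+t\eta(b,a)$ followed by a pointwise bound on $\left\vert f^{\prime}\right\vert$), but with the midpoint kernel ($t$ on $[0,\tfrac12]$, $t-1$ on $[\tfrac12,1]$) attached to the left-hand side of (\ref{p}) instead of your trapezoid kernel $2t-1$ attached to the right-hand side. Your route is exactly the standard one (and the one used in \cite{BGDragomir}): the substitution $x=a+t\eta(b,a)$, integration by parts against $2t-1$, the $t$-free bound $\left\vert f^{\prime}(a+t\eta(b,a))\right\vert\leq\max\left\{\left\vert f^{\prime}(a)\right\vert,\left\vert f^{\prime}(b)\right\vert\right\}$ from Definition \ref{d4} with $u=a$, $v=b$, and $\int_{0}^{1}\left\vert 2t-1\right\vert dt=\tfrac12$ together give the stated constant $\tfrac{\eta(b,a)}{4}$. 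Two small corrections: you do not need Condition C or (\ref{c}) to keep the path inside $A$ --- that is precisely what invexity of $A$ at $a$ (Definition \ref{d1}) asserts, and Condition C plays no role in this argument; and, as in the theorem statement itself, passing to absolute values tacitly assumes $\eta(b,a)>0$ (otherwise the right-hand side should read $\left\vert\eta(b,a)\right\vert/4$). Your reading of the misprints in the statement (the $\eta(a,b)$ in the denominator and the limits $b$, $b+\eta(b,a)$) is the correct one.
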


In this article, using functions whose derivatives absolute values are
preinvex and log-preinvex, we obtained new inequalities releted to the left
side of Hermite-Hadamard inequality for nonconvex functions.

\section{Hermite-Hadamard type inequalities for preinvex functions}

We shall start with the following refinements of the Hermite-Hadamard
inequality for preinvex functions. Firstly, we give the following results
connected with the left part of (\ref{p}):

\begin{theorem}
\label{tt3} Let $K\subseteq 
\mathbb{R}
$ be an open invex subset with respect to $\eta :K\times K\rightarrow 
\mathbb{R}
$. Suppose that $f:K\rightarrow 
\mathbb{R}
$ is a diferentiable function. If $\left\vert f^{\prime }\right\vert $ is
preinvex on $K,$ then, for every $a,b\in K$ the following inequality holds:%
\begin{equation}
\left\vert \frac{1}{\eta (b,a)}\int_{a}^{a+\eta (b,a)}f(x)dx-f\left( \frac{%
2a+\eta (b,a)}{2}\right) \right\vert \leq \frac{\eta (b,a)}{8}\left[
\left\vert f^{\prime }(a)\right\vert +\left\vert f^{\prime }(b)\right\vert %
\right]  \label{4}
\end{equation}

\begin{proof}
Suppose that $a,a+\eta (b,a)$ $\in K$. Since $K$ is invex with respect to $%
\eta $, for every $t\in \left[ 0,1\right] $, we have $a+\eta (b,a)\in K$.
Integrating by parts implies that%
\begin{eqnarray}
&&\int_{0}^{\frac{1}{2}}tf^{\prime }(a+t\eta (b,a))dt+\int_{\frac{1}{2}%
}^{1}(t-1)f^{\prime }(a+t\eta (b,a))dt  \notag \\
&&  \label{5} \\
&=&\left[ \frac{tf(a+t\eta (b,a))}{\eta (b,a)}\right] _{0}^{\frac{1}{2}}+%
\left[ \frac{(t-1)f(a+t\eta (b,a))}{\eta (b,a)}\right] _{\frac{1}{2}}^{1} 
\notag \\
&&-\frac{1}{\eta (b,a)}\int_{0}^{1}f(a+t\eta (b,a))dt  \notag \\
&&  \notag \\
&=&\frac{1}{\eta (b,a)}f\left( \frac{2a+\eta (b,a)}{2}\right) -\frac{1}{%
\left[ \eta (b,a)\right] ^{2}}\int_{a}^{a+\eta (b,a)}f(x)dx.  \notag
\end{eqnarray}%
By preinvex function of \ $\left\vert f^{\prime }\right\vert $ and (\ref{5}%
), we have%
\begin{eqnarray*}
&&\left\vert \frac{1}{\eta (b,a)}\int_{a}^{a+\eta (b,a)}f(x)dx-f\left( \frac{%
2a+\eta (b,a)}{2}\right) \right\vert \\
&\leq &\eta (b,a)\left[ \int_{0}^{\frac{1}{2}}t\left\vert f^{\prime
}(a+t\eta (b,a))\right\vert dt+\int_{\frac{1}{2}}^{1}(1-t)\left\vert
f^{\prime }(a+t\eta (b,a))\right\vert dt\right] \\
&\leq &\eta (b,a)\left[ \int_{0}^{\frac{1}{2}}t\left[ (1-t)\left\vert
f^{\prime }(a)\right\vert +t\left\vert f^{\prime }(b)\right\vert \right]
dt+\int_{\frac{1}{2}}^{1}(1-t)\left[ (1-t)\left\vert f^{\prime
}(a)\right\vert +t\left\vert f^{\prime }(b)\right\vert \right] dt\right] \\
&\leq &\eta (b,a)\left[ \frac{\left\vert f^{\prime }(a)\right\vert
+\left\vert f^{\prime }(b)\right\vert }{8}\right] .
\end{eqnarray*}%
The proof is completed.
\end{proof}
\end{theorem}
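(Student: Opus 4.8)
The plan is to mimic the proof strategy of Theorem \ref{t1} (the classical result from \cite{USK}), transplanting it to the invex setting via an identity analogous to Lemma \ref{l1}. The first step is to establish the integral identity
\begin{equation*}
\frac{1}{\eta(b,a)}\int_a^{a+\eta(b,a)}f(x)\,dx - f\!\left(\frac{2a+\eta(b,a)}{2}\right)
= \eta(b,a)\left[\int_0^{1/2} t f'(a+t\eta(b,a))\,dt + \int_{1/2}^1 (t-1) f'(a+t\eta(b,a))\,dt\right].
\end{equation*}
This is obtained by integration by parts on each of the two pieces on the right: the antiderivative of $f'(a+t\eta(b,a))$ with respect to $t$ is $f(a+t\eta(b,a))/\eta(b,a)$, so the boundary terms at $t=1/2$ combine to give $f((2a+\eta(b,a))/2)/\eta(b,a)$, while the boundary terms at $t=0$ and $t=1$ vanish, and the remaining $-\frac{1}{\eta(b,a)}\int_0^1 f(a+t\eta(b,a))\,dt$ becomes $-\frac{1}{[\eta(b,a)]^2}\int_a^{a+\eta(b,a)}f(x)\,dx$ after the substitution $x = a+t\eta(b,a)$. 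Multiplying through by $\eta(b,a)$ gives the stated identity.

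Next I would take absolute values and apply the triangle inequality, pulling $|\cdot|$ inside the integrals; since $t\geq 0$ on $[0,1/2]$ and $t-1\leq 0$ on $[1/2,1]$ I can replace the weights by $t$ and $1-t$ respectively, both nonnegative. Then I invoke the hypothesis that $|f'|$ is preinvex on $K$: applying Definition \ref{d2} to $|f'|$ at the point $a+t\eta(b,a) = a + t\eta(b,a)$ (viewing it as lying on the $\eta$-path from $a$ toward $b$), we get $|f'(a+t\eta(b,a))| \leq (1-t)|f'(a)| + t|f'(b)|$. Substituting this bound into both integrals yields
\begin{equation*}
\eta(b,a)\left[\int_0^{1/2} t\big((1-t)|f'(a)|+t|f'(b)|\big)\,dt + \int_{1/2}^1 (1-t)\big((1-t)|f'(a)|+t|f'(b)|\big)\,dt\right].
\end{equation*}

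The final step is the elementary evaluation of these four polynomial integrals over $[0,1/2]$ and $[1/2,1]$. By the symmetry $t \leftrightarrow 1-t$, the coefficient of $|f'(a)|$ and the coefficient of $|f'(b)|$ turn out to be equal, each contributing $1/16$, so the bracket sums to $\tfrac{1}{8}\big(|f'(a)|+|f'(b)|\big)$, giving the claimed bound $\tfrac{\eta(b,a)}{8}[|f'(a)|+|f'(b)|]$. I do not anticipate a genuine obstacle here; the only point requiring slight care is the justification that $a+t\eta(b,a)$ is a legitimate argument for the preinvexity inequality of $|f'|$ — this rests on $K$ being invex with respect to $\eta$, so that the entire segment $\{a+t\eta(b,a): t\in[0,1]\}$ lies in $K$, which the theorem's hypotheses guarantee. (Note that the ordering of $a$ and $a+\eta(b,a)$, and hence the orientation of the integral $\int_a^{a+\eta(b,a)}$, is handled uniformly by the substitution, so no separate case analysis is needed.)
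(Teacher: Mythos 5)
Your proposal is correct and follows essentially the same route as the paper's own proof: the same integration-by-parts identity (the paper's (\ref{5})), the same triangle-inequality step, the same application of the preinvexity of $\left\vert f^{\prime }\right\vert$ along the path $a+t\eta (b,a)$, and the same evaluation of the four elementary integrals. Two harmless slips only: the identity as you state it has the opposite sign to what your own parts computation yields (it should read $f\left( \tfrac{2a+\eta (b,a)}{2}\right) -\tfrac{1}{\eta (b,a)}\int_{a}^{a+\eta (b,a)}f(x)dx=\eta (b,a)[\cdots ]$, immaterial once absolute values are taken), and the coefficients of $\left\vert f^{\prime }(a)\right\vert$ and $\left\vert f^{\prime }(b)\right\vert$ come out to $1/8$ each rather than $1/16$, which is exactly what your (correctly stated) final bound $\tfrac{\eta (b,a)}{8}\left[ \left\vert f^{\prime }(a)\right\vert +\left\vert f^{\prime }(b)\right\vert \right]$ requires.
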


\begin{theorem}
\label{tt4} Let $K\subseteq 
\mathbb{R}
$ be an open invex subset with respect to $\eta :K\times K\rightarrow 
\mathbb{R}
$. Suppose that $f:K\rightarrow 
\mathbb{R}
$ is a diferentiable function. Assume $p\in 
\mathbb{R}
$ with $p>1$. If $\left\vert f^{\prime }\right\vert ^{\frac{p}{p-1}}$ is
preinvex on $K$ then, for every $a,b\in K$ the following inequality holds%
\begin{eqnarray}
&&\left\vert \frac{1}{\eta (b,a)}\int_{a}^{a+\eta (b,a)}f(x)dx-f\left( \frac{%
2a+\eta (b,a)}{2}\right) \right\vert  \notag \\
&&  \label{6} \\
&\leq &\frac{\eta (b,a)}{16}\left( \frac{4}{p+1}\right) ^{\frac{1}{p}}\left[
\left( 3\left\vert f^{\prime }(a)\right\vert ^{\frac{p}{p-1}}+\left\vert
f^{\prime }(b)\right\vert ^{\frac{p}{p-1}}\right) ^{\frac{p-1}{p}}+\left(
\left\vert f^{\prime }(a)\right\vert ^{\frac{p}{p-1}}+3\left\vert f^{\prime
}(b)\right\vert ^{\frac{p}{p-1}}\right) ^{\frac{p-1}{p}}\right] .  \notag
\end{eqnarray}

\begin{proof}
Suppose that $a,a+\eta (b,a)\in K$. By assumption, H\"{o}lder's inequality
and (\ref{5}) in the proof of Theorem \ref{tt3}, we have%
\begin{eqnarray*}
&&\left\vert \frac{1}{\eta (b,a)}\int_{a}^{a+\eta (b,a)}f(x)dx-f\left( \frac{%
2a+\eta (b,a)}{2}\right) \right\vert \\
&\leq &\eta (b,a)\left[ \int_{0}^{\frac{1}{2}}t\left\vert f^{\prime
}(a+t\eta (b,a))\right\vert dt+\int_{\frac{1}{2}}^{1}(1-t)\left\vert
f^{\prime }(a+t\eta (b,a))\right\vert dt\right] \\
&\leq &\eta (b,a)\left[ \left( \int_{0}^{\frac{1}{2}}t^{p}dt\right) ^{\frac{1%
}{p}}\left( \int_{0}^{\frac{1}{2}}\left\vert f^{\prime }(a+t\eta
(b,a))\right\vert ^{\frac{p}{p-1}}dt\right) ^{\frac{p-1}{p}}\right. \\
&&\left. +\left( \int_{\frac{1}{2}}^{1}\left( 1-t\right) ^{p}dt\right) ^{%
\frac{1}{p}}\left( \int_{\frac{1}{2}}^{1}\left\vert f^{\prime }(a+t\eta
(b,a))\right\vert ^{\frac{p}{p-1}}dt\right) ^{\frac{p-1}{p}}\right] \\
&\leq &\frac{\eta (b,a)}{2^{1+\frac{1}{p}}(p+1)^{\frac{1}{p}}}\left[ \left(
\int_{0}^{\frac{1}{2}}\left[ (1-t)\left\vert f^{\prime }(a)\right\vert ^{%
\frac{p}{p-1}}+t\left\vert f^{\prime }(b)\right\vert ^{\frac{p}{p-1}}\right]
dt\right) ^{\frac{p-1}{p}}\right. \\
&&\left. +\left( \int_{\frac{1}{2}}^{1}\left[ (1-t)\left\vert f^{\prime
}(a)\right\vert ^{\frac{p}{p-1}}+t\left\vert f^{\prime }(b)\right\vert ^{%
\frac{p}{p-1}}\right] dt\right) ^{\frac{p-1}{p}}\right] \\
&=&\frac{\eta (b,a)}{16}\left( \frac{4}{p+1}\right) ^{\frac{1}{p}}\left[
\left( 3\left\vert f^{\prime }(a)\right\vert ^{\frac{p}{p-1}}+\left\vert
f^{\prime }(b)\right\vert ^{\frac{p}{p-1}}\right) ^{\frac{p-1}{p}}+\left(
\left\vert f^{\prime }(a)\right\vert ^{\frac{p}{p-1}}+3\left\vert f^{\prime
}(b)\right\vert ^{\frac{p}{p-1}}\right) ^{\frac{p-1}{p}}\right]
\end{eqnarray*}%
which completes the proof.
\end{proof}
\end{theorem}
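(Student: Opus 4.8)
The plan is to follow the scheme of the proof of Theorem \ref{tt3}, but to replace the crude estimate on $\int_{0}^{1/2}t\left\vert f^{\prime }\right\vert $ by an application of H\"{o}lder's inequality, so that the preinvexity hypothesis can be used on $\left\vert f^{\prime }\right\vert ^{\frac{p}{p-1}}$ rather than on $\left\vert f^{\prime }\right\vert $ itself.

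First I would reuse identity (\ref{5}) from the proof of Theorem \ref{tt3}: since $K$ is invex with respect to $\eta $ and $a,a+\eta (b,a)\in K$, that identity expresses $\frac{1}{\eta (b,a)}f\left( \frac{2a+\eta (b,a)}{2}\right) -\frac{1}{\left[ \eta (b,a)\right] ^{2}}\int_{a}^{a+\eta (b,a)}f(x)\,dx$ as $\int_{0}^{1/2}tf^{\prime }(a+t\eta (b,a))\,dt+\int_{1/2}^{1}(t-1)f^{\prime }(a+t\eta (b,a))\,dt$. Multiplying by $\eta (b,a)$, taking absolute values, and using the triangle inequality together with $\left\vert t-1\right\vert =1-t$ on $\left[ \frac{1}{2},1\right] $ bounds the left-hand side of (\ref{6}) by $\eta (b,a)$ times $\int_{0}^{1/2}t\left\vert f^{\prime }(a+t\eta (b,a))\right\vert dt+\int_{1/2}^{1}(1-t)\left\vert f^{\prime }(a+t\eta (b,a))\right\vert dt$.

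Next I would apply H\"{o}lder's inequality with exponents $p$ and $\frac{p}{p-1}$ to each of these two integrals, keeping the weight $t$ (respectively $1-t$) in the $L^{p}$ factor. This produces the two numerical factors $\left( \int_{0}^{1/2}t^{p}\,dt\right) ^{1/p}$ and $\left( \int_{1/2}^{1}(1-t)^{p}\,dt\right) ^{1/p}$, both equal to $\left( \frac{1}{2^{p+1}(p+1)}\right) ^{1/p}=\frac{1}{2^{1+1/p}(p+1)^{1/p}}$, together with $\left( \int_{0}^{1/2}\left\vert f^{\prime }(a+t\eta (b,a))\right\vert ^{\frac{p}{p-1}}dt\right) ^{\frac{p-1}{p}}$ and $\left( \int_{1/2}^{1}\left\vert f^{\prime }(a+t\eta (b,a))\right\vert ^{\frac{p}{p-1}}dt\right) ^{\frac{p-1}{p}}$. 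In each of the last two factors I would invoke the preinvexity of $\left\vert f^{\prime }\right\vert ^{\frac{p}{p-1}}$, i.e. $\left\vert f^{\prime }(a+t\eta (b,a))\right\vert ^{\frac{p}{p-1}}\leq (1-t)\left\vert f^{\prime }(a)\right\vert ^{\frac{p}{p-1}}+t\left\vert f^{\prime }(b)\right\vert ^{\frac{p}{p-1}}$, and integrate: over $\left[ 0,\frac{1}{2}\right] $ the bound is $\frac{3}{8}\left\vert f^{\prime }(a)\right\vert ^{\frac{p}{p-1}}+\frac{1}{8}\left\vert f^{\prime }(b)\right\vert ^{\frac{p}{p-1}}$, and over $\left[ \frac{1}{2},1\right] $ it is $\frac{1}{8}\left\vert f^{\prime }(a)\right\vert ^{\frac{p}{p-1}}+\frac{3}{8}\left\vert f^{\prime }(b)\right\vert ^{\frac{p}{p-1}}$.

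Finally I would collect the constants: pulling $8^{-(p-1)/p}$ out of each bracket and combining $2^{1+1/p}\cdot 8^{(p-1)/p}=2^{4-2/p}$ with the factor $(p+1)^{-1/p}$ shows that the overall prefactor equals $\frac{1}{2^{4-2/p}(p+1)^{1/p}}=\frac{1}{16}\left( \frac{4}{p+1}\right) ^{1/p}$, which is exactly the constant in (\ref{6}); this finishes the estimate. The argument is entirely routine, with no genuine obstacle; the only points requiring a little care are the bookkeeping of the powers of $2$ in the final constant and the observation (via $t\mapsto 1-t$) that $\int_{0}^{1/2}t^{p}\,dt=\int_{1/2}^{1}(1-t)^{p}\,dt$.
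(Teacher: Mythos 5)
Your proposal is correct and follows essentially the same route as the paper's own proof: the identity (\ref{5}), the triangle inequality, H\"{o}lder's inequality with the weights $t^{p}$ and $(1-t)^{p}$ kept in the $L^{p}$ factor, the preinvexity of $\left\vert f^{\prime }\right\vert ^{\frac{p}{p-1}}$ applied pointwise, and the same evaluation of the elementary integrals. Your bookkeeping of the constant, $2^{1+\frac{1}{p}}\cdot 8^{\frac{p-1}{p}}=2^{4-\frac{2}{p}}$ giving $\frac{1}{16}\left( \frac{4}{p+1}\right) ^{\frac{1}{p}}$, matches the paper exactly.
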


\begin{theorem}
\label{tt5} Under the assumptaions of Theorem \ref{tt4}. Then, for every $%
a,b\in K$ the following inequality holds%
\begin{eqnarray}
&&\left\vert \frac{1}{\eta (b,a)}\int_{a}^{a+\eta (b,a)}f(x)dx-f\left( \frac{%
2a+\eta (b,a)}{2}\right) \right\vert  \notag \\
&&  \label{7} \\
&\leq &\frac{\eta (b,a)}{16}\left( \frac{4}{p+1}\right) ^{\frac{1}{p}}(3^{%
\frac{p-1}{p}}+1)\left[ \left\vert f^{\prime }(a)\right\vert +\left\vert
f^{\prime }(b)\right\vert \right] .  \notag
\end{eqnarray}

\begin{proof}
We consider the inequality (\ref{6}) i.e.%
\begin{eqnarray*}
&&\left\vert \frac{1}{\eta (b,a)}\int_{a}^{a+\eta (b,a)}f(x)dx-f\left( \frac{%
2a+\eta (b,a)}{2}\right) \right\vert \\
&& \\
&\leq &\frac{\eta (b,a)}{16}\left( \frac{4}{p+1}\right) ^{\frac{1}{p}}\left[
\left( 3\left\vert f^{\prime }(a)\right\vert ^{\frac{p}{p-1}}+\left\vert
f^{\prime }(b)\right\vert ^{\frac{p}{p-1}}\right) ^{\frac{p-1}{p}}+\left(
\left\vert f^{\prime }(a)\right\vert ^{\frac{p}{p-1}}+3\left\vert f^{\prime
}(b)\right\vert ^{\frac{p}{p-1}}\right) ^{\frac{p-1}{p}}\right] .
\end{eqnarray*}%
Let $a_{1}=3\left\vert f^{\prime }(a)\right\vert ^{\frac{p}{p-1}}$, $%
b_{1}=\left\vert f^{\prime }(b)\right\vert ^{\frac{p}{p-1}}$, $%
a_{2}=\left\vert f^{\prime }(a)\right\vert ^{\frac{p}{p-1}}$, $%
b_{2}=3\left\vert f^{\prime }(b)\right\vert ^{\frac{p}{p-1}}$. Here $%
0<\left( p-1\right) /p<1$, for $p>1$. Using the fact that,%
\begin{equation*}
\dsum\limits_{k=1}^{n}\left( a_{k}+b_{k}\right) ^{s}\leq
\sum_{k=1}^{n}a_{k}^{s}+\sum_{k=1}^{n}b_{k}^{s}
\end{equation*}%
For $\left( 0\leq s<1\right) $, $a_{1},a_{2},...,a_{n}\geq 0$, $%
b_{1},b_{2},...,b_{n}\geq 0$, we obtain%
\begin{eqnarray*}
&&\frac{\eta (b,a)}{16}\left( \frac{4}{p+1}\right) ^{\frac{1}{p}}\left[
\left( 3\left\vert f^{\prime }(a)\right\vert ^{\frac{p}{p-1}}+\left\vert
f^{\prime }(b)\right\vert ^{\frac{p}{p-1}}\right) ^{\frac{p-1}{p}}+\left(
\left\vert f^{\prime }(a)\right\vert ^{\frac{p}{p-1}}+3\left\vert f^{\prime
}(b)\right\vert ^{\frac{p}{p-1}}\right) ^{\frac{p-1}{p}}\right] \\
&\leq &\frac{\eta (b,a)}{16}\left( \frac{4}{p+1}\right) ^{\frac{1}{p}}(3^{%
\frac{p-1}{p}}+1)\left[ \left\vert f^{\prime }(a)\right\vert +\left\vert
f^{\prime }(b)\right\vert \right]
\end{eqnarray*}%
which completed proof.
\end{proof}
\end{theorem}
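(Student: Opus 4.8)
The plan is to deduce this estimate directly from inequality (\ref{6}) of Theorem \ref{tt4} by bounding the two compound powers appearing on its right-hand side. Since $p>1$, the exponent $s:=\frac{p-1}{p}$ lies in $(0,1)$, which is precisely the range in which the elementary subadditivity inequality $(x+y)^{s}\le x^{s}+y^{s}$ holds for all $x,y\ge 0$; more generally $\sum_{k}(a_{k}+b_{k})^{s}\le \sum_{k}a_{k}^{s}+\sum_{k}b_{k}^{s}$ for nonnegative $a_{k},b_{k}$ (which itself follows from concavity of $t\mapsto t^{s}$). This is the only ingredient needed beyond Theorem \ref{tt4} itself.

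First I would record inequality (\ref{6}) verbatim as the starting point. Then, setting $a_{1}=3|f^{\prime}(a)|^{\frac{p}{p-1}}$, $b_{1}=|f^{\prime}(b)|^{\frac{p}{p-1}}$ and $a_{2}=|f^{\prime}(a)|^{\frac{p}{p-1}}$, $b_{2}=3|f^{\prime}(b)|^{\frac{p}{p-1}}$, I would apply the subadditivity inequality with $s=\frac{p-1}{p}$ to each of the two summands $(a_{1}+b_{1})^{s}$ and $(a_{2}+b_{2})^{s}$. Using $\bigl(|f^{\prime}(a)|^{\frac{p}{p-1}}\bigr)^{\frac{p-1}{p}}=|f^{\prime}(a)|$ and $\bigl(3|f^{\prime}(a)|^{\frac{p}{p-1}}\bigr)^{\frac{p-1}{p}}=3^{\frac{p-1}{p}}|f^{\prime}(a)|$ (and likewise for $b$), the first summand is at most $3^{\frac{p-1}{p}}|f^{\prime}(a)|+|f^{\prime}(b)|$ and the second at most $|f^{\prime}(a)|+3^{\frac{p-1}{p}}|f^{\prime}(b)|$.

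Adding these two bounds gives $\bigl(3^{\frac{p-1}{p}}+1\bigr)\bigl(|f^{\prime}(a)|+|f^{\prime}(b)|\bigr)$, and multiplying through by the common factor $\frac{\eta(b,a)}{16}\bigl(\frac{4}{p+1}\bigr)^{1/p}$ carried over from (\ref{6}) yields exactly (\ref{7}). There is essentially no genuine obstacle here: the analytic content of the argument is entirely contained in Theorem \ref{tt4}, and the present theorem is merely a clean, if slightly lossy, simplification of that bound into a form linear in $|f^{\prime}(a)|$ and $|f^{\prime}(b)|$. The only point worth a word of care is checking that $s=\frac{p-1}{p}\in[0,1)$ so that the subadditivity estimate is applied in the correct direction; this holds for every $p>1$ and degenerates ($s\to 1$) only in the limit $p\to\infty$.
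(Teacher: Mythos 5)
Your proposal is correct and follows essentially the same route as the paper: starting from (\ref{6}), choosing the same $a_{1},b_{1},a_{2},b_{2}$, and applying the subadditivity $(x+y)^{s}\leq x^{s}+y^{s}$ with $s=\frac{p-1}{p}\in(0,1)$ to each bracketed term before recombining. Your explicit computation of the two intermediate bounds $3^{\frac{p-1}{p}}\left\vert f^{\prime }(a)\right\vert +\left\vert f^{\prime }(b)\right\vert$ and $\left\vert f^{\prime }(a)\right\vert +3^{\frac{p-1}{p}}\left\vert f^{\prime }(b)\right\vert$ simply makes explicit the step the paper leaves implicit, so there is nothing to correct.
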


\begin{theorem}
\label{tt6} Let $K\subseteq 
\mathbb{R}
$ be an open invex subset with respect to $\eta :K\times K\rightarrow 
\mathbb{R}
$. Suppose that $f:K\rightarrow 
\mathbb{R}
$ is a diferentiable function. Assume $q\in 
\mathbb{R}
$ with $q\geq 1$. If $\left\vert f^{\prime }\right\vert ^{q}$ is preinvex on 
$K$ then, for every $a,b\in K$ the following inequality holds%
\begin{eqnarray}
&&\left\vert \frac{1}{\eta (b,a)}\int_{a}^{a+\eta (b,a)}f(x)dx-f\left( \frac{%
2a+\eta (b,a)}{2}\right) \right\vert   \notag \\
&&  \label{8} \\
&\leq &\frac{\eta (b,a)}{8}\left[ \left( \frac{2\left\vert f^{\prime
}(a)\right\vert ^{q}+\left\vert f^{\prime }(b)\right\vert ^{q}}{3}\right) ^{%
\frac{1}{q}}+\left( \frac{\left\vert f^{\prime }(a)\right\vert
^{q}+2\left\vert f^{\prime }(b)\right\vert ^{q}}{3}\right) ^{\frac{1}{q}}%
\right] .  \notag
\end{eqnarray}

\begin{proof}
Suppose that $a,a+\eta (b,a)\in K$. By assumption, using the well known
power mean inequality and (\ref{5}) in the proof of Theorem \ref{tt3}, we
have%
\begin{eqnarray*}
&&\left\vert \frac{1}{\eta (b,a)}\int_{a}^{a+\eta (b,a)}f(x)dx-f\left( \frac{%
2a+\eta (b,a)}{2}\right) \right\vert  \\
&\leq &\eta (b,a)\left[ \int_{0}^{\frac{1}{2}}t\left\vert f^{\prime
}(a+t\eta (b,a))\right\vert dt+\int_{\frac{1}{2}}^{1}(1-t)\left\vert
f^{\prime }(a+t\eta (b,a))\right\vert dt\right]  \\
&\leq &\eta (b,a)\left[ \left( \int_{0}^{\frac{1}{2}}tdt\right) ^{\frac{1}{p}%
}\left( \int_{0}^{\frac{1}{2}}t\left\vert f^{\prime }(a+t\eta
(b,a))\right\vert ^{q}dt\right) ^{\frac{1}{q}}\right.  \\
&&\left. +\left( \int_{\frac{1}{2}}^{1}\left( 1-t\right) dt\right) ^{\frac{1%
}{p}}\left( \int_{\frac{1}{2}}^{1}\left( 1-t\right) \left\vert f^{\prime
}(a+t\eta (b,a))\right\vert ^{q}dt\right) ^{\frac{1}{q}}\right]  \\
&\leq &\frac{\eta (b,a)}{8^{\frac{1}{p}}}\left[ \left( \int_{0}^{\frac{1}{2}%
}t\left[ (1-t)\left\vert f^{\prime }(a)\right\vert ^{q}+t\left\vert
f^{\prime }(b)\right\vert ^{q}\right] dt\right) ^{\frac{1}{q}}\right.  \\
&&\left. +\left( \int_{\frac{1}{2}}^{1}\left( 1-t\right) \left[
(1-t)\left\vert f^{\prime }(a)\right\vert ^{q}+t\left\vert f^{\prime
}(b)\right\vert ^{q}\right] dt\right) ^{\frac{1}{q}}\right]  \\
&=&\frac{\eta (b,a)}{8}\left[ \left( \frac{2\left\vert f^{\prime
}(a)\right\vert ^{q}+\left\vert f^{\prime }(b)\right\vert ^{q}}{3}\right) ^{%
\frac{1}{q}}+\left( \frac{\left\vert f^{\prime }(a)\right\vert
^{q}+2\left\vert f^{\prime }(b)\right\vert ^{q}}{3}\right) ^{\frac{1}{q}}%
\right] ,
\end{eqnarray*}%
where $\frac{1}{p}+\frac{1}{q}=1.$ The proof is completed.
\end{proof}
\end{theorem}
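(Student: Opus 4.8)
The plan is to mirror the proofs of Theorems~\ref{tt3}--\ref{tt5}, using the power mean (weighted H\"older) inequality in place of H\"older's inequality. I would begin from the identity (\ref{5}) established in the proof of Theorem~\ref{tt3}: multiplying it by $\eta(b,a)$, taking absolute values, and applying the triangle inequality for integrals gives
\begin{equation*}
\left\vert \frac{1}{\eta (b,a)}\int_{a}^{a+\eta (b,a)}f(x)dx-f\left( \frac{2a+\eta (b,a)}{2}\right) \right\vert \leq \eta (b,a)\left[ \int_{0}^{\frac{1}{2}}t\left\vert f^{\prime }(a+t\eta (b,a))\right\vert dt+\int_{\frac{1}{2}}^{1}(1-t)\left\vert f^{\prime }(a+t\eta (b,a))\right\vert dt\right].
\end{equation*}
This reduces everything to estimating the two weighted integrals on the right.

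Next I would apply the power mean inequality with exponents satisfying $\frac1p+\frac1q=1$ (so $\frac1p=1-\frac1q$), using the weights $t\,dt$ on $[0,\tfrac12]$ and $(1-t)\,dt$ on $[\tfrac12,1]$; this is where the hypothesis $q\ge 1$ enters. Since $\int_{0}^{1/2}t\,dt=\int_{1/2}^{1}(1-t)\,dt=\tfrac18$, each integral is bounded by $(\tfrac18)^{1-1/q}$ times a $q$-th root of $\int t\left\vert f^{\prime }(a+t\eta(b,a))\right\vert^{q}dt$, respectively of $\int (1-t)\left\vert f^{\prime }(a+t\eta(b,a))\right\vert^{q}dt$.

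The substantive step is then to insert the preinvexity bound $\left\vert f^{\prime }(a+t\eta(b,a))\right\vert^{q}\le (1-t)\left\vert f^{\prime}(a)\right\vert^{q}+t\left\vert f^{\prime}(b)\right\vert^{q}$ and to evaluate the resulting elementary integrals, obtaining $\int_{0}^{1/2}t\big[(1-t)\left\vert f^{\prime}(a)\right\vert^{q}+t\left\vert f^{\prime}(b)\right\vert^{q}\big]dt=\tfrac1{24}\big(2\left\vert f^{\prime}(a)\right\vert^{q}+\left\vert f^{\prime}(b)\right\vert^{q}\big)$ and, after the substitution $u=1-t$, $\int_{1/2}^{1}(1-t)\big[(1-t)\left\vert f^{\prime}(a)\right\vert^{q}+t\left\vert f^{\prime}(b)\right\vert^{q}\big]dt=\tfrac1{24}\big(\left\vert f^{\prime}(a)\right\vert^{q}+2\left\vert f^{\prime}(b)\right\vert^{q}\big)$. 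Each summand then carries the constant $(\tfrac18)^{1-1/q}(\tfrac1{24})^{1/q}$, which collapses to $\tfrac18(\tfrac8{24})^{1/q}=\tfrac18(\tfrac13)^{1/q}$; absorbing the $\tfrac13$ into the $q$-th root and restoring the factor $\eta(b,a)$ produces exactly the right-hand side of (\ref{8}).

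I expect the only real difficulty to be bookkeeping rather than a genuine obstacle: tracking the exponent $1-\tfrac1q$ correctly through the power mean step, evaluating the four elementary integrals, and verifying the constant identity $(\tfrac18)^{1-1/q}(\tfrac1{24})^{1/q}=\tfrac18(\tfrac13)^{1/q}$. As a consistency check, setting $q=1$ collapses both weighted power means to arithmetic means and recovers the bound $\tfrac{\eta(b,a)}{8}\big[\left\vert f^{\prime}(a)\right\vert+\left\vert f^{\prime}(b)\right\vert\big]$ of Theorem~\ref{tt3}.
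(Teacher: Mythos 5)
Your proposal is correct and follows essentially the same route as the paper's own proof: bound the left side via identity (\ref{5}), apply the power mean (weighted H\"older) inequality with weights $t\,dt$ and $(1-t)\,dt$, insert the preinvexity bound for $\left\vert f^{\prime}\right\vert^{q}$, and evaluate the elementary integrals, with the constant collapsing to $\tfrac{1}{8}\left(\tfrac{1}{3}\right)^{1/q}$ exactly as you computed. Your $q=1$ consistency check against Theorem \ref{tt3} is a nice addition not present in the paper.
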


\begin{theorem}
\label{tt7} Under the assumptions of Theorem \ref{tt6}. Then the following
inequality holds:%
\begin{equation}
\left\vert \frac{1}{\eta (b,a)}\int_{a}^{a+\eta (b,a)}f(x)dx-f\left( \frac{%
2a+\eta (b,a)}{2}\right) \right\vert \leq \frac{\eta (b,a)}{8}(\frac{2^{%
\frac{1}{q}}+1}{3^{\frac{1}{q}}})\left[ \left\vert f^{\prime }(a)\right\vert
+\left\vert f^{\prime }(b)\right\vert \right]  \label{9}
\end{equation}

\begin{proof}
We consider the inequality (\ref{8}), i.e.%
\begin{eqnarray*}
&&\left\vert \frac{1}{\eta (b,a)}\int_{a}^{a+\eta (b,a)}f(x)dx-f\left( \frac{%
2a+\eta (b,a)}{2}\right) \right\vert \\
&& \\
&\leq &\frac{\eta (b,a)}{8}\left[ \left( \frac{2\left\vert f^{\prime
}(a)\right\vert ^{q}+\left\vert f^{\prime }(b)\right\vert ^{q}}{3}\right) ^{%
\frac{1}{q}}+\left( \frac{\left\vert f^{\prime }(a)\right\vert
^{q}+2\left\vert f^{\prime }(b)\right\vert ^{q}}{3}\right) ^{\frac{1}{q}}%
\right] .
\end{eqnarray*}%
Let $a_{1}=2\left\vert f^{\prime }(a)\right\vert ^{q}/3$, $b_{1}=\left\vert
f^{\prime }(b)\right\vert ^{q}/3$, $a_{2}=\left\vert f^{\prime
}(a)\right\vert ^{q}/3$, $b_{2}=2\left\vert f^{\prime }(b)\right\vert ^{q}/3$%
. Here $0<1/q<1$, for $q\geq 1$. Using the fact that%
\begin{equation*}
\dsum\limits_{k=1}^{n}\left( a_{k}+b_{k}\right) ^{s}\leq
\sum_{k=1}^{n}a_{k}^{s}+\sum_{k=1}^{n}b_{k}^{s}.
\end{equation*}%
For $\left( 0\leq s<1\right) $, $a_{1},a_{2},...,a_{n}\geq 0$, $%
b_{1},b_{2},...,b_{n}\geq 0$, we obtain%
\begin{eqnarray*}
&&\frac{\eta (b,a)}{8}\left[ \left( \frac{2\left\vert f^{\prime
}(a)\right\vert ^{q}+\left\vert f^{\prime }(b)\right\vert ^{q}}{3}\right) ^{%
\frac{1}{q}}+\left( \frac{\left\vert f^{\prime }(a)\right\vert
^{q}+2\left\vert f^{\prime }(b)\right\vert ^{q}}{3}\right) ^{\frac{1}{q}}%
\right] \\
&\leq &\frac{\eta (b,a)}{8}(\frac{2^{\frac{1}{q}}+1}{3^{\frac{1}{q}}})\left[
\left\vert f^{\prime }(a)\right\vert +\left\vert f^{\prime }(b)\right\vert %
\right] .
\end{eqnarray*}
\end{proof}
\end{theorem}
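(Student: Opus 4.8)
The plan is to start from the already-established inequality \eqref{8} of Theorem \ref{tt6} and simply bound each of the two $q$-th-mean terms on its right-hand side from above by a linear expression in $\left\vert f^{\prime }(a)\right\vert$ and $\left\vert f^{\prime }(b)\right\vert$. The engine for this is the elementary superadditivity-type fact quoted in the proof of Theorem \ref{tt5}: for $0\leq s<1$ and nonnegative reals $a_k,b_k$ one has $\sum_{k}(a_k+b_k)^{s}\leq \sum_{k}a_k^{s}+\sum_{k}b_k^{s}$. Here we only need the single-term case $n=1$, namely $(a_1+b_1)^{s}\leq a_1^{s}+b_1^{s}$, applied with exponent $s=1/q\in(0,1)$, which is legitimate since $q\geq 1$ (the boundary case $q=1$ is trivial and can be noted separately, or simply absorbed since then $s=1$ and equality holds).

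Concretely, first I would set $a_{1}=\tfrac{2}{3}\left\vert f^{\prime }(a)\right\vert ^{q}$, $b_{1}=\tfrac{1}{3}\left\vert f^{\prime }(b)\right\vert ^{q}$ and $a_{2}=\tfrac{1}{3}\left\vert f^{\prime }(a)\right\vert ^{q}$, $b_{2}=\tfrac{2}{3}\left\vert f^{\prime }(b)\right\vert ^{q}$, exactly as indicated in the statement's surrounding text. Applying $(a+b)^{1/q}\leq a^{1/q}+b^{1/q}$ to each of the two summands gives
\[
\left( \frac{2\left\vert f^{\prime }(a)\right\vert ^{q}+\left\vert f^{\prime }(b)\right\vert ^{q}}{3}\right) ^{\frac{1}{q}}\leq \frac{2^{\frac{1}{q}}}{3^{\frac{1}{q}}}\left\vert f^{\prime }(a)\right\vert +\frac{1}{3^{\frac{1}{q}}}\left\vert f^{\prime }(b)\right\vert ,
\]
\[
\left( \frac{\left\vert f^{\prime }(a)\right\vert ^{q}+2\left\vert f^{\prime }(b)\right\vert ^{q}}{3}\right) ^{\frac{1}{q}}\leq \frac{1}{3^{\frac{1}{q}}}\left\vert f^{\prime }(a)\right\vert +\frac{2^{\frac{1}{q}}}{3^{\frac{1}{q}}}\left\vert f^{\prime }(b)\right\vert .
\]
Adding the two and collecting terms yields a common factor $\dfrac{2^{1/q}+1}{3^{1/q}}$ multiplying $\left\vert f^{\prime }(a)\right\vert +\left\vert f^{\prime }(b)\right\vert$. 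Substituting this back into \eqref{8} and keeping the prefactor $\eta(b,a)/8$ produces exactly \eqref{9}.

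There is essentially no obstacle here: the only point requiring a word of care is the validity of the exponent inequality, i.e. that $1/q$ lies in $[0,1)$ when $q\geq 1$ (with $q=1$ giving plain equality), which is precisely the hypothesis of Theorem \ref{tt6} that we are inheriting. Everything else is arithmetic rearrangement, so the proof is short and parallels the proof of Theorem \ref{tt5} almost verbatim, with $s=1/q$ in place of $s=(p-1)/p$ and the explicit weights $\{2/3,1/3\}$ in place of $\{3,1\}$.
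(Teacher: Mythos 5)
Your proposal is correct and is essentially identical to the paper's own proof: both start from inequality \eqref{8} and apply the subadditivity fact $(a+b)^{s}\leq a^{s}+b^{s}$ with $s=1/q$ and the same choices $a_{1}=\tfrac{2}{3}\vert f'(a)\vert^{q}$, $b_{1}=\tfrac{1}{3}\vert f'(b)\vert^{q}$, $a_{2}=\tfrac{1}{3}\vert f'(a)\vert^{q}$, $b_{2}=\tfrac{2}{3}\vert f'(b)\vert^{q}$, then collect the common factor $\bigl(2^{1/q}+1\bigr)/3^{1/q}$. Your explicit remark on the boundary case $q=1$ (where $s=1$ gives equality) is a small improvement in care over the paper, which asserts $0<1/q<1$ for $q\geq 1$.
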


\section{Hermite-Hadamard type inequalities for log-preinvex function}

In this section, we shall continue with the following refinements of the
Hermite-Hadamard inequality for log-preinvex functions and we give some
results connected with the left part of (\ref{p}):

\begin{theorem}
\label{z} Let $K\subseteq 
\mathbb{R}
$ be an open invex subset with respect to $\eta :K\times K\rightarrow 
\mathbb{R}
$. Suppose that $f:K\rightarrow 
\mathbb{R}
$ is a diferentiable function. If $\left\vert f^{\prime }\right\vert $ is
log-preinvex on $K$ then, for every $a,b\in K$ the following inequality holds%
\begin{equation*}
\left\vert \frac{1}{\eta (b,a)}\int_{a}^{a+\eta (b,a)}f(x)dx-f\left( \frac{%
2a+\eta (b,a)}{2}\right) \right\vert \leq \eta (b,a)\left( \frac{\left\vert
f^{\prime }(b)\right\vert ^{\frac{1}{2}}-\left\vert f^{\prime
}(a)\right\vert ^{\frac{1}{2}}}{\log \left\vert f^{\prime }(b)\right\vert
-\log \left\vert f^{\prime }(a)\right\vert }\right) ^{2}
\end{equation*}
\end{theorem}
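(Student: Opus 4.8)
The plan is to mimic the structure of the proof of Theorem \ref{tt3}, starting from the identity (\ref{5}) established there. That identity gives
\begin{equation*}
\left\vert \frac{1}{\eta (b,a)}\int_{a}^{a+\eta (b,a)}f(x)dx-f\left( \frac{2a+\eta (b,a)}{2}\right) \right\vert \leq \eta (b,a)\left[ \int_{0}^{\frac{1}{2}}t\left\vert f^{\prime }(a+t\eta (b,a))\right\vert dt+\int_{\frac{1}{2}}^{1}(1-t)\left\vert f^{\prime }(a+t\eta (b,a))\right\vert dt\right] .
\end{equation*}
Now I would invoke the log-preinvexity of $\left\vert f^{\prime }\right\vert$, which gives $\left\vert f^{\prime }(a+t\eta (b,a))\right\vert \leq \left\vert f^{\prime }(a)\right\vert ^{1-t}\left\vert f^{\prime }(b)\right\vert ^{t}$ for $t\in [0,1]$. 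Substituting this bound into both integrals reduces the problem to estimating $\int_{0}^{1/2} t\,\left\vert f^{\prime }(a)\right\vert ^{1-t}\left\vert f^{\prime }(b)\right\vert ^{t}\,dt$ and $\int_{1/2}^{1}(1-t)\,\left\vert f^{\prime }(a)\right\vert ^{1-t}\left\vert f^{\prime }(b)\right\vert ^{t}\,dt$.

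The key computational step is to evaluate, or rather bound, these two integrals. Writing $\left\vert f^{\prime }(a)\right\vert ^{1-t}\left\vert f^{\prime }(b)\right\vert ^{t} = \left\vert f^{\prime }(a)\right\vert \, r^{t}$ where $r = \left\vert f^{\prime }(b)\right\vert / \left\vert f^{\prime }(a)\right\vert$, each integrand is $t$ (resp. $1-t$) times an exponential in $t$. A clean way to get the stated closed form is to bound the weight: on $[0,1/2]$ one has $t\le 1/2$ and on $[1/2,1]$ one has $1-t\le 1/2$, so the sum is at most $\frac{1}{2}\int_{0}^{1}\left\vert f^{\prime }(a)\right\vert ^{1-t}\left\vert f^{\prime }(b)\right\vert ^{t}\,dt$. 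But that gives $\frac{1}{2}\cdot\frac{\left\vert f^{\prime}(b)\right\vert-\left\vert f^{\prime}(a)\right\vert}{\log\left\vert f^{\prime}(b)\right\vert-\log\left\vert f^{\prime}(a)\right\vert}$, not the squared-difference form in the statement. To obtain the factor $\left(\frac{\sqrt{x}-\sqrt{y}}{\log x-\log y}\right)^2$ one should instead split the exponent symmetrically: write $\left\vert f^{\prime }(a)\right\vert ^{1-t}\left\vert f^{\prime }(b)\right\vert ^{t}=\bigl(\left\vert f^{\prime }(a)\right\vert^{1-t}\left\vert f^{\prime }(b)\right\vert^{t-\frac12}\bigr)\cdot\bigl(\left\vert f^{\prime }(b)\right\vert^{\frac12}\bigr)$ or, more to the point, recognize that after substituting $t\mapsto 1-t$ in the first integral and combining, one is left with an integral of the form $\int_{1/2}^{1}(1-t)\bigl(\left\vert f^{\prime}(a)\right\vert^{1-t}\left\vert f^{\prime}(b)\right\vert^{t}+\left\vert f^{\prime}(a)\right\vert^{t}\left\vert f^{\prime}(b)\right\vert^{1-t}\bigr)\,dt$, and the square arises by bounding $\left\vert f^{\prime}(a)\right\vert^{1-t}\left\vert f^{\prime}(b)\right\vert^{t}+\left\vert f^{\prime}(a)\right\vert^{t}\left\vert f^{\prime}(b)\right\vert^{1-t}\ge 2\left\vert f^{\prime}(a)\right\vert^{1/2}\left\vert f^{\prime}(b)\right\vert^{1/2}$ the wrong way — so the honest route is to compute the integrals exactly by parts (using $\int t\,c^{t}\,dt = \frac{c^{t}}{\log c}\bigl(t-\frac{1}{\log c}\bigr)$) and then algebraically simplify the resulting expression in $\left\vert f^{\prime }(a)\right\vert$, $\left\vert f^{\prime }(b)\right\vert$ until it collapses to $\left(\frac{\left\vert f^{\prime }(b)\right\vert^{1/2}-\left\vert f^{\prime }(a)\right\vert^{1/2}}{\log\left\vert f^{\prime }(b)\right\vert-\log\left\vert f^{\prime }(a)\right\vert}\right)^{2}$.

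I expect the main obstacle to be exactly this last algebraic simplification: after integrating by parts one gets a somewhat messy combination of terms of the form $\left\vert f^{\prime }(a)\right\vert^{1/2}\left\vert f^{\prime }(b)\right\vert^{1/2}$, $\left\vert f^{\prime }(a)\right\vert$, $\left\vert f^{\prime }(b)\right\vert$ divided by powers of $\log\left\vert f^{\prime }(b)\right\vert-\log\left\vert f^{\prime }(a)\right\vert$, and one must check that the numerator factors as $\bigl(\left\vert f^{\prime }(b)\right\vert^{1/2}-\left\vert f^{\prime }(a)\right\vert^{1/2}\bigr)^{2}=\left\vert f^{\prime }(b)\right\vert-2\left\vert f^{\prime }(a)\right\vert^{1/2}\left\vert f^{\prime }(b)\right\vert^{1/2}+\left\vert f^{\prime }(a)\right\vert$. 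The midpoint $t=1/2$ is what makes this work, since $\left\vert f^{\prime }(a)\right\vert^{1-t}\left\vert f^{\prime }(b)\right\vert^{t}$ evaluated at $t=1/2$ gives precisely the geometric mean. A degenerate case $\left\vert f^{\prime }(a)\right\vert = \left\vert f^{\prime }(b)\right\vert$ (where the denominator vanishes) should be handled separately or by a limiting argument, noting the right-hand side tends to $\left\vert f^{\prime }(a)\right\vert/4$ by L'Hôpital, consistent with Theorem \ref{tt3}. Once the exact evaluation is in hand, the inequality is in fact an equality up to the single estimate $\left\vert f^{\prime }(a+t\eta(b,a))\right\vert\le\left\vert f^{\prime}(a)\right\vert^{1-t}\left\vert f^{\prime}(b)\right\vert^{t}$, so no further inequalities beyond log-preinvexity are needed.
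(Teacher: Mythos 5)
Your final plan is exactly the paper's own proof: start from identity (\ref{5}) of Theorem \ref{tt3}, insert the log-preinvexity bound $|f'(a+t\eta(b,a))|\le |f'(a)|^{1-t}|f'(b)|^{t}$, and evaluate the two integrals exactly by parts, whereupon the boundary contributions at $t=\tfrac12$ give the geometric mean $|f'(a)|^{1/2}|f'(b)|^{1/2}$ and the sum collapses to $\left(\frac{|f'(b)|^{1/2}-|f'(a)|^{1/2}}{\log |f'(b)|-\log |f'(a)|}\right)^{2}$, so the approach is correct and essentially identical to the paper's (your abandoned crude bound $t\le\tfrac12$ is rightly discarded, since it yields a weaker, differently shaped estimate). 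Your remark that the degenerate case $|f'(a)|=|f'(b)|$ must be read as a limit is a sensible point the paper passes over silently.
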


\begin{proof}
Suppose that $a,a+\eta (b,a)\in K$. By assumption and (\ref{5}) in the proof
of Theorem \ref{tt3}, integrating by parts implies that%
\begin{eqnarray*}
&&\left\vert \frac{1}{\eta (b,a)}\int_{a}^{a+\eta (b,a)}f(x)dx-f\left( \frac{%
2a+\eta (b,a)}{2}\right) \right\vert \\
&\leq &\eta (b,a)\left[ \int_{0}^{\frac{1}{2}}t\left\vert f^{\prime
}(a+t\eta (b,a))\right\vert dt+\int_{\frac{1}{2}}^{1}(1-t)\left\vert
f^{\prime }(a+t\eta (b,a))\right\vert dt\right] \\
&\leq &\eta (b,a)\left[ \int_{0}^{\frac{1}{2}}t\left\vert f^{\prime
}(a)\right\vert ^{1-t}\left\vert f^{\prime }(b)\right\vert ^{t}dt+\int_{%
\frac{1}{2}}^{1}(1-t)\left\vert f^{\prime }(a)\right\vert ^{1-t}\left\vert
f^{\prime }(b)\right\vert ^{t}dt\right] \\
&=&\eta (b,a)\left[ \int_{0}^{\frac{1}{2}}\left\vert f^{\prime
}(a)\right\vert t\left( \frac{\left\vert f^{\prime }(b)\right\vert }{%
\left\vert f^{\prime }(a)\right\vert }\right) ^{t}dt+\int_{\frac{1}{2}%
}^{1}(1-t)\left\vert f^{\prime }(b)\right\vert \left( \frac{\left\vert
f^{\prime }(b)\right\vert }{\left\vert f^{\prime }(a)\right\vert }\right)
^{1-t}dt\right] \\
&=&\eta (b,a)\left[ \frac{\left\vert f^{\prime }(a)\right\vert }{\log
\left\vert f^{\prime }(b)\right\vert -\log \left\vert f^{\prime
}(a)\right\vert }\left[ -\frac{1}{\log \left\vert f^{\prime }(b)\right\vert
-\log \left\vert f^{\prime }(a)\right\vert }\left( \frac{\left\vert
f^{\prime }(b)\right\vert }{\left\vert f^{\prime }(a)\right\vert }\right)
^{t}\right] _{0}^{\frac{1}{2}}\right. \\
&&\left. +\left[ \frac{1}{\log \left\vert f^{\prime }(b)\right\vert -\log
\left\vert f^{\prime }(a)\right\vert }\left( \frac{\left\vert f^{\prime
}(b)\right\vert }{\left\vert f^{\prime }(a)\right\vert }\right) ^{t}\right]
_{\frac{1}{2}}^{1}\right] \\
&=&\eta (b,a)\left[ \frac{-2\left\vert f^{\prime }(a)\right\vert ^{\frac{1}{2%
}}\left\vert f^{\prime }(b)\right\vert ^{\frac{1}{2}}}{\left( \log
\left\vert f^{\prime }(b)\right\vert -\log \left\vert f^{\prime
}(a)\right\vert \right) ^{2}}+\frac{\left\vert f^{\prime }(a)\right\vert }{%
\left( \log \left\vert f^{\prime }(b)\right\vert -\log \left\vert f^{\prime
}(a)\right\vert \right) ^{2}}\right. \\
&&\left. +\frac{\left\vert f^{\prime }(a)\right\vert }{\left( \log
\left\vert f^{\prime }(b)\right\vert -\log \left\vert f^{\prime
}(a)\right\vert \right) ^{2}}\right] \\
&=&\eta (b,a)\left[ \frac{\left\vert f^{\prime }(b)\right\vert ^{\frac{1}{2}%
}-\left\vert f^{\prime }(a)\right\vert ^{\frac{1}{2}}}{\log \left\vert
f^{\prime }(b)\right\vert -\log \left\vert f^{\prime }(a)\right\vert }\right]
^{2}
\end{eqnarray*}%
which completes the proof.
\end{proof}

\begin{theorem}
\label{fd} Let $K\subseteq 
\mathbb{R}
$ be an open invex subset with respect to $\eta :K\times K\rightarrow 
\mathbb{R}
$. Suppose that $f:K\rightarrow 
\mathbb{R}
$ is a diferentiable function. Assume $q\in 
\mathbb{R}
$ with $q\geq 1$. If $\left\vert f^{\prime }\right\vert ^{q}$ is
log-preinvex on $K$ then, for every $a,b\in K$ the following inequality holds%
\begin{eqnarray*}
&&\left\vert \frac{1}{\eta (b,a)}\int_{a}^{a+\eta (b,a)}f(x)dx-f\left( \frac{%
2a+\eta (b,a)}{2}\right) \right\vert \\
&\leq &\eta (b,a)\left[ \frac{\left\vert f^{\prime }(a)\right\vert ^{\frac{1%
}{2}}}{2^{\frac{1}{p}}\left( p+1\right) ^{\frac{1}{p}}q^{\frac{1}{q}}}\left( 
\frac{\left\vert f^{\prime }(b)\right\vert ^{\frac{q}{2}}-\left\vert
f^{\prime }(a)\right\vert ^{\frac{q}{2}}}{\log \left\vert f^{\prime
}(b)\right\vert -\log \left\vert f^{\prime }(a)\right\vert }\right) ^{\frac{1%
}{q}}\right] .
\end{eqnarray*}
\end{theorem}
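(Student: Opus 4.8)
The plan is to argue exactly as in Theorems \ref{tt4} and \ref{tt6}, with H\"{o}lder's inequality doing the splitting and with the log-preinvexity bound replacing the preinvexity one. I would start from the identity (\ref{5}) obtained inside the proof of Theorem \ref{tt3}; multiplying it by $\eta (b,a)$, taking absolute values and applying the triangle inequality shows that the left-hand side of the asserted inequality is dominated by
\[
\eta (b,a)\left[ \int_{0}^{\frac{1}{2}}t\left\vert f^{\prime }(a+t\eta
(b,a))\right\vert dt+\int_{\frac{1}{2}}^{1}(1-t)\left\vert f^{\prime }(a+t\eta
(b,a))\right\vert dt\right] ,
\]
so everything reduces to estimating these two integrals.

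To each of them I would apply H\"{o}lder's inequality with exponents $p$ and $q$, $\frac{1}{p}+\frac{1}{q}=1$, separating off $\left( \int_{0}^{1/2}t^{p}dt\right) ^{1/p}$ and $\left( \int_{1/2}^{1}(1-t)^{p}dt\right) ^{1/p}$, both of which equal $\left( \frac{(1/2)^{p+1}}{p+1}\right) ^{1/p}=\frac{1}{2^{1+1/p}(p+1)^{1/p}}$. For the remaining factors I would invoke the hypothesis that $\left\vert f^{\prime }\right\vert ^{q}$ is log-preinvex on $K$ (Definition \ref{d3}): for $t\in \left[ 0,1\right] $,
\[
\left\vert f^{\prime }(a+t\eta (b,a))\right\vert ^{q}\leq \left( \left\vert
f^{\prime }(a)\right\vert ^{q}\right) ^{1-t}\left( \left\vert f^{\prime
}(b)\right\vert ^{q}\right) ^{t}=\left\vert f^{\prime }(a)\right\vert
^{q}\left( \frac{\left\vert f^{\prime }(b)\right\vert }{\left\vert f^{\prime
}(a)\right\vert }\right) ^{qt}.
\]
Inserting this and evaluating the elementary integral $\int_{0}^{1/2}r^{t}dt=(r^{1/2}-1)/\ln r$ with $r=\left( \left\vert f^{\prime }(b)\right\vert /\left\vert f^{\prime }(a)\right\vert \right) ^{q}$ (so that $\ln r=q(\log \left\vert f^{\prime }(b)\right\vert -\log \left\vert f^{\prime }(a)\right\vert )$ and $r^{1/2}=\left\vert f^{\prime }(b)\right\vert ^{q/2}/\left\vert f^{\prime }(a)\right\vert ^{q/2}$), the prefactor $\left\vert f^{\prime }(a)\right\vert ^{q}$ absorbs $r^{-1/2}$ and, after taking the $q$-th root, one is left with
\[
\left( \int_{0}^{1/2}\left\vert f^{\prime }(a+t\eta (b,a))\right\vert
^{q}dt\right) ^{1/q}\leq \frac{\left\vert f^{\prime }(a)\right\vert ^{1/2}}{
q^{1/q}}\left( \frac{\left\vert f^{\prime }(b)\right\vert ^{q/2}-\left\vert
f^{\prime }(a)\right\vert ^{q/2}}{\log \left\vert f^{\prime }(b)\right\vert
-\log \left\vert f^{\prime }(a)\right\vert }\right) ^{1/q}.
\]
The integral over $\left[ 1/2,1\right] $ is handled by the same computation (with the roles of $a$ and $b$ interchanged, via the substitution $s=1-t$). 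Multiplying the H\"{o}lder factors back together, restoring $\eta (b,a)$ and collecting the two estimates then yields the desired inequality.

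The step I expect to require the most care is the exponent bookkeeping in the last displayed computation: one must track precisely the powers of $\left\vert f^{\prime }(a)\right\vert $ and $\left\vert f^{\prime }(b)\right\vert $ and the logarithm so that the prefactor $\left\vert f^{\prime }(a)\right\vert ^{q}$, the term $r^{1/2}$, and the factor $q$ coming from $\ln r$ all collapse to exactly $\left\vert f^{\prime }(a)\right\vert ^{1/2}/q^{1/q}$ times the stated difference quotient raised to the power $1/q$. A secondary point is the degenerate case $\left\vert f^{\prime }(a)\right\vert =\left\vert f^{\prime }(b)\right\vert $, in which the denominator $\log \left\vert f^{\prime }(b)\right\vert -\log \left\vert f^{\prime }(a)\right\vert $ vanishes; there one passes to the limit using $\lim_{x\rightarrow y}(x-y)/(\log x-\log y)=y$, under which the right-hand side reduces to a constant multiple of $\eta (b,a)\left\vert f^{\prime }(a)\right\vert $, consistent with the bound obtained directly from log-preinvexity (which then only forces $\left\vert f^{\prime }(a+t\eta (b,a))\right\vert \leq \left\vert f^{\prime }(a)\right\vert $).
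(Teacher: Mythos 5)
Your route is the same one the paper takes: start from the identity (\ref{5}) in the proof of Theorem \ref{tt3}, apply the triangle inequality, split the two integrals by H\"{o}lder's inequality with exponents $p$ and $q$, then invoke log-preinvexity of $\left\vert f^{\prime }\right\vert ^{q}$ and evaluate the resulting exponential integrals. Your evaluation of the piece over $\left[ 0,\tfrac{1}{2}\right] $ is correct, including the H\"{o}lder factor $\tfrac{1}{2^{1+1/p}(p+1)^{1/p}}$ and the bound $\tfrac{\left\vert f^{\prime }(a)\right\vert ^{1/2}}{q^{1/q}}\left( \tfrac{\left\vert f^{\prime }(b)\right\vert ^{q/2}-\left\vert f^{\prime }(a)\right\vert ^{q/2}}{\log \left\vert f^{\prime }(b)\right\vert -\log \left\vert f^{\prime }(a)\right\vert }\right) ^{1/q}$.

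The gap is in your final sentence, ``collecting the two estimates then yields the desired inequality.'' By your own reduction (the substitution $s=1-t$, i.e.\ interchanging the roles of $a$ and $b$), the piece over $\left[ \tfrac{1}{2},1\right] $ is bounded by $\tfrac{\left\vert f^{\prime }(b)\right\vert ^{1/2}}{q^{1/q}}\left( \tfrac{\left\vert f^{\prime }(b)\right\vert ^{q/2}-\left\vert f^{\prime }(a)\right\vert ^{q/2}}{\log \left\vert f^{\prime }(b)\right\vert -\log \left\vert f^{\prime }(a)\right\vert }\right) ^{1/q}$, with $\left\vert f^{\prime }(b)\right\vert ^{1/2}$ in front, not $\left\vert f^{\prime }(a)\right\vert ^{1/2}$. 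Hence what your argument actually proves is
\[
\left\vert \frac{1}{\eta (b,a)}\int_{a}^{a+\eta (b,a)}f(x)dx-f\left( \frac{2a+\eta (b,a)}{2}\right) \right\vert \leq \frac{\eta (b,a)}{2^{1+\frac{1}{p}}\left( p+1\right) ^{\frac{1}{p}}q^{\frac{1}{q}}}\left( \left\vert f^{\prime }(a)\right\vert ^{\frac{1}{2}}+\left\vert f^{\prime }(b)\right\vert ^{\frac{1}{2}}\right) \left( \frac{\left\vert f^{\prime }(b)\right\vert ^{\frac{q}{2}}-\left\vert f^{\prime }(a)\right\vert ^{\frac{q}{2}}}{\log \left\vert f^{\prime }(b)\right\vert -\log \left\vert f^{\prime }(a)\right\vert }\right) ^{\frac{1}{q}},
\]
whereas the stated right-hand side equals $\tfrac{2\left\vert f^{\prime }(a)\right\vert ^{1/2}}{2^{1+1/p}(p+1)^{1/p}q^{1/q}}\left( \cdot \right) ^{1/q}$; the two coincide only when $\left\vert f^{\prime }(a)\right\vert =\left\vert f^{\prime }(b)\right\vert $, and passing from the symmetric bound to the stated one would require $\left\vert f^{\prime }(b)\right\vert \leq \left\vert f^{\prime }(a)\right\vert $, which is not assumed. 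So the proposal as written does not reach the theorem's asymmetric bound. You are in good company: the paper's own proof makes exactly the same leap, asserting its final equality without displaying the evaluation of the second integral, and that equality is not an identity; the honest output of this method is the symmetric bound above (the natural companion of Theorem \ref{z}), not the one stated in Theorem \ref{fd}. Your limiting treatment of the degenerate case $\left\vert f^{\prime }(a)\right\vert =\left\vert f^{\prime }(b)\right\vert $ is fine but does not affect this point.
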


\begin{proof}
By H\"{o}lder inequality and (\ref{5}) in the proof of Theorem \ref{tt3}, we
have 
\begin{eqnarray*}
&&\left\vert \frac{1}{\eta (b,a)}\int_{a}^{\eta (b,a)}f(x)dx-f\left( \frac{%
2a+\eta (b,a)}{2}\right) \right\vert \\
&\leq &\eta (b,a)\left[ \int_{0}^{\frac{1}{2}}t\left\vert f^{\prime
}(a+t\eta (b,a))\right\vert dt+\int_{\frac{1}{2}}^{1}(1-t)\left\vert
f^{\prime }(a+t\eta (b,a))\right\vert dt\right] \\
&\leq &\eta (b,a)\left[ \left( \int_{0}^{\frac{1}{2}}t^{p}dt\right) ^{\frac{1%
}{p}}\left( \int_{0}^{\frac{1}{2}}\left\vert f^{\prime }(a+t\eta
(b,a))\right\vert ^{q}\right) ^{\frac{1}{q}}dt+\left( \int_{\frac{1}{2}%
}^{1}(1-t)^{p}\right) ^{\frac{1}{p}}\left( \int_{\frac{1}{2}}^{1}\left\vert
f^{\prime }(a+t\eta (b,a))\right\vert ^{q}dt\right) ^{\frac{1}{q}}\right] \\
&\leq &\eta (b,a)\left[ \left( \int_{0}^{\frac{1}{2}}t^{p}dt\right) ^{\frac{1%
}{p}}\left( \int_{0}^{\frac{1}{2}}\left( \left\vert f^{\prime
}(a)\right\vert ^{1-t}\left\vert f^{\prime }(b)\right\vert ^{t}\right)
^{q}dt\right) ^{\frac{1}{q}}\right. \\
&&\left. +\left( \int_{\frac{1}{2}}^{1}(1-t)^{p}\right) ^{\frac{1}{p}}\left(
\int_{\frac{1}{2}}^{1}\left( \left\vert f^{\prime }(a)\right\vert
^{1-t}\left\vert f^{\prime }(b)\right\vert ^{t}\right) ^{q}dt\right) ^{\frac{%
1}{q}}\right] \\
&=&\eta (b,a)\left[ \frac{\left\vert f^{\prime }(a)\right\vert ^{\frac{1}{2}}%
}{2^{\frac{1}{p}}\left( p+1\right) ^{\frac{1}{p}}q^{\frac{1}{q}}}\left( 
\frac{\left\vert f^{\prime }(b)\right\vert ^{\frac{q}{2}}-\left\vert
f^{\prime }(a)\right\vert ^{\frac{q}{2}}}{\log \left\vert f^{\prime
}(b)\right\vert -\log \left\vert f^{\prime }(a)\right\vert }\right) ^{\frac{1%
}{q}}\right]
\end{eqnarray*}%
where $\frac{1}{p}+\frac{1}{q}=1.$
\end{proof}

Now, we give the followig results connected with the left  part of (\ref{h})
for classical log-convex functions.

\begin{corollary}
\label{q} Under the assumptions of Theorem \ref{z} with $\eta (b,a)=b-a,$
then the following inequality holds:%
\begin{equation*}
\left\vert \frac{1}{b-a}\int_{a}^{b}f(x)dx-f\left( \frac{a+b}{2}\right)
\right\vert \leq (b-a)\left( \frac{\left\vert f^{\prime }(b)\right\vert ^{%
\frac{1}{2}}-\left\vert f^{\prime }(a)\right\vert ^{\frac{1}{2}}}{\log
\left\vert f^{\prime }(b)\right\vert -\log \left\vert f^{\prime
}(a)\right\vert }\right) ^{2}.
\end{equation*}
\end{corollary}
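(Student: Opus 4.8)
The plan is to derive Corollary \ref{q} as a direct specialization of Theorem \ref{z}, so essentially all the work has already been done. First I would invoke Theorem \ref{z}: under its hypotheses (namely $K\subseteq\mathbb{R}$ an open invex subset with respect to $\eta$, $f:K\rightarrow\mathbb{R}$ differentiable, and $\left\vert f^{\prime}\right\vert$ log-preinvex on $K$), the inequality
\begin{equation*}
\left\vert \frac{1}{\eta (b,a)}\int_{a}^{a+\eta (b,a)}f(x)dx-f\left( \frac{2a+\eta (b,a)}{2}\right) \right\vert \leq \eta (b,a)\left( \frac{\left\vert f^{\prime }(b)\right\vert ^{\frac{1}{2}}-\left\vert f^{\prime }(a)\right\vert ^{\frac{1}{2}}}{\log \left\vert f^{\prime }(b)\right\vert -\log \left\vert f^{\prime }(a)\right\vert }\right) ^{2}
\end{equation*}
holds for every $a,b\in K$.

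Next I would substitute $\eta(b,a)=b-a$ throughout. With this choice, the interval $K=\left[a,a+\eta(b,a)\right]$ becomes the ordinary interval $[a,b]$, the upper limit of integration $a+\eta(b,a)$ becomes $b$, the midpoint $\frac{2a+\eta(b,a)}{2}$ becomes $\frac{a+b}{2}$, and the prefactor $\frac{1}{\eta(b,a)}$ becomes $\frac{1}{b-a}$. Observe that with $\eta(v,u)=v-u$ the invexity of $K$ reduces to ordinary convexity (as noted in Remark \ref{r1}), and log-preinvexity of $\left\vert f^{\prime}\right\vert$ with respect to this $\eta$ reduces to ordinary log-convexity, so the hypotheses of Theorem \ref{z} are automatically met whenever $\left\vert f^{\prime}\right\vert$ is log-convex on $[a,b]$. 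Making these replacements on both sides of the displayed inequality yields exactly
\begin{equation*}
\left\vert \frac{1}{b-a}\int_{a}^{b}f(x)dx-f\left( \frac{a+b}{2}\right) \right\vert \leq (b-a)\left( \frac{\left\vert f^{\prime }(b)\right\vert ^{\frac{1}{2}}-\left\vert f^{\prime }(a)\right\vert ^{\frac{1}{2}}}{\log \left\vert f^{\prime }(b)\right\vert -\log \left\vert f^{\prime }(a)\right\vert }\right) ^{2},
\end{equation*}
which is the claimed inequality.

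Since the argument is purely a substitution, there is no real obstacle; the only thing worth flagging is the implicit nondegeneracy assumption $\left\vert f^{\prime}(a)\right\vert \neq \left\vert f^{\prime}(b)\right\vert$ needed for the right-hand side to be well defined (when they are equal the bound should be read in the limiting sense, giving $(b-a)\left\vert f^{\prime}(a)\right\vert/4$), and the standing positivity requirement $\left\vert f^{\prime}\right\vert>0$ built into the definition of log-preinvexity (Definition \ref{d3}). Beyond noting these, the proof is a one-line consequence of Theorem \ref{z} with $\eta(b,a)=b-a$, and I would present it as such.
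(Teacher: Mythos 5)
Your proof is correct and follows exactly the route the paper intends: Corollary \ref{q} is stated without a separate proof precisely because it is the specialization $\eta(b,a)=b-a$ of Theorem \ref{z}, which is what you carry out. Your added remarks on the degenerate case $\left\vert f^{\prime}(a)\right\vert=\left\vert f^{\prime}(b)\right\vert$ and the positivity built into Definition \ref{d3} are reasonable observations but not required for the substitution argument.
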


\begin{corollary}
\label{q1} Under the assumptions of Theorem \ref{fd}\ with $\eta (b,a)=b-a,$
then the following inequality holds:%
\begin{eqnarray*}
&&\left\vert \frac{1}{b-a}\int_{a}^{b}f(x)dx-f\left( \frac{a+b}{2}\right)
\right\vert \\
&\leq &(b-a)\left[ \frac{\left\vert f^{\prime }(a)\right\vert ^{\frac{1}{2}}%
}{2^{\frac{1}{p}}\left( p+1\right) ^{\frac{1}{p}}q^{\frac{1}{q}}}\left( 
\frac{\left\vert f^{\prime }(b)\right\vert ^{\frac{q}{2}}-\left\vert
f^{\prime }(a)\right\vert ^{\frac{q}{2}}}{\log \left\vert f^{\prime
}(b)\right\vert -\log \left\vert f^{\prime }(a)\right\vert }\right) ^{\frac{1%
}{q}}\right] .
\end{eqnarray*}
\end{corollary}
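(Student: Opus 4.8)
The plan is to obtain Corollary~\ref{q1} as an immediate specialization of Theorem~\ref{fd}, taking the mapping $\eta$ to be $\eta(b,a)=b-a$. First I would check that this choice is admissible: as recalled in Remark~\ref{r1}, every interval $[a,b]\subseteq\mathbb{R}$ is invex with respect to $\eta(v,u)=v-u$, and a differentiable function whose $q$-th power of the derivative is log-preinvex with respect to this $\eta$ is precisely one for which $\left\vert f'\right\vert^{q}$ is log-convex on $[a,b]$ in the classical sense. Thus the hypotheses of Theorem~\ref{fd} are met, and its conclusion holds with $\eta$ specialized accordingly.

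Next I would perform the substitution $\eta(b,a)=b-a$ in the inequality furnished by Theorem~\ref{fd}. On the left-hand side the upper limit of integration becomes $a+\eta(b,a)=a+(b-a)=b$, so that $\frac{1}{\eta(b,a)}\int_{a}^{a+\eta(b,a)}f(x)\,dx$ turns into $\frac{1}{b-a}\int_{a}^{b}f(x)\,dx$, and the midpoint argument becomes $\frac{2a+\eta(b,a)}{2}=\frac{2a+(b-a)}{2}=\frac{a+b}{2}$. On the right-hand side the sole occurrence of $\eta$ is the leading factor $\eta(b,a)$, which becomes $b-a$; the bracketed expression involves only $\left\vert f'(a)\right\vert$, $\left\vert f'(b)\right\vert$, $p$, and $q$, and is therefore untouched. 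Collecting these reductions yields precisely the stated inequality.

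There is essentially no obstacle here, since the argument is a pure specialization. The only point requiring a word is the admissibility check above --- namely that the linear $\eta(b,a)=b-a$ falls within the framework of Theorem~\ref{fd} and that log-preinvexity reduces to log-convexity --- and this is settled by Remark~\ref{r1}. Granting it, the corollary follows at once, which completes the proof. (By the same token, Corollary~\ref{q} is recovered from Theorem~\ref{z} by the identical substitution, though that is not needed here.)
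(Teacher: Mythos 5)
Your proposal is correct and coincides with the paper's own (implicit) argument: the corollary is stated there without proof precisely because it is the immediate specialization $\eta(b,a)=b-a$ of Theorem~\ref{fd}, under which $a+\eta(b,a)=b$, $\frac{2a+\eta(b,a)}{2}=\frac{a+b}{2}$, and log-preinvexity reduces to ordinary log-convexity. Nothing further is needed.
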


\section{An extension to several variables functions}

In this section, we shall extend the Corollary \ref{q} and Corollary \ref{q1}
to functions of several variables defined on invex subsets of $%
\mathbb{R}
^{n}$

Let $K\subseteq 
\mathbb{R}
^{n}$ be an invex set with respect to $\eta :K\times K\rightarrow 
\mathbb{R}
^{n}.$ For every $x,y\in K$ the $\eta $-path $P_{xv}$ joining the points $x$
and $v:=x+\eta (y,x)$ is defined as follows%
\begin{equation*}
P_{xv}=\left\{ z:z=x+t\eta (y,x):t\in \left[ 0,1\right] \right\} .
\end{equation*}

\begin{proposition}
\label{eqw}Let $K\subseteq 
\mathbb{R}
^{n}$ be an invex set with respect to $\eta :K\times K\rightarrow 
\mathbb{R}
^{n}$ and $f:K\rightarrow 
\mathbb{R}
$ is a functio. Suppose that $\eta $ satisfies Condition C on $K.$ Then for
every $x,y\in K$ the function $f$ is log-preinvex with respect to $\eta $ on 
$\eta $-path $P_{xv}$ if and only if the function $\varphi :\left[ 0,1\right]
\rightarrow 
\mathbb{R}
$ defined by%
\begin{equation*}
\varphi \left( t\right) :=f\left( x+t\eta (y,x)\right) ,
\end{equation*}%
is log-convex on $\left[ 0,1\right] .$
\end{proposition}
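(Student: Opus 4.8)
The plan is to prove the two implications separately, both relying on Condition~C to translate statements along the $\eta$-path $P_{xv}$ into ordinary statements on $[0,1]$.

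First suppose $f$ is log-preinvex with respect to $\eta$ on $P_{xv}$. To show $\varphi$ is log-convex on $[0,1]$, fix $t_1,t_2\in[0,1]$ and $\lambda\in[0,1]$, and set $z_1=x+t_1\eta(y,x)$, $z_2=x+t_2\eta(y,x)$, both of which lie in $K$ by invexity. The key step is to observe, using Condition~C in the form of the identity~(\ref{c}), that
\begin{equation*}
z_1+\lambda\,\eta(z_2,z_1)=x+\bigl((1-\lambda)t_1+\lambda t_2\bigr)\eta(y,x),
\end{equation*}
since $\eta(z_2,z_1)=(t_2-t_1)\eta(y,x)$. Hence the left-hand side equals $x+t_\lambda\eta(y,x)$ with $t_\lambda=(1-\lambda)t_1+\lambda t_2$, so that $f\bigl(z_1+\lambda\eta(z_2,z_1)\bigr)=\varphi(t_\lambda)$. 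Applying the definition of log-preinvexity of $f$ (Definition~\ref{d3}) to the points $z_1,z_2\in K$ and the parameter $\lambda$ gives
\begin{equation*}
\varphi\bigl((1-\lambda)t_1+\lambda t_2\bigr)=f\bigl(z_1+\lambda\eta(z_2,z_1)\bigr)\le \bigl(f(z_1)\bigr)^{1-\lambda}\bigl(f(z_2)\bigr)^{\lambda}=\bigl(\varphi(t_1)\bigr)^{1-\lambda}\bigl(\varphi(t_2)\bigr)^{\lambda},
\end{equation*}
which is exactly log-convexity of $\varphi$ on $[0,1]$.

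Conversely, suppose $\varphi$ is log-convex on $[0,1]$. Let $u,v\in P_{xv}$, say $u=x+t_1\eta(y,x)$ and $v=x+t_2\eta(y,x)$, and let $t\in[0,1]$. Again by~(\ref{c}) we have $\eta(v,u)=(t_2-t_1)\eta(y,x)$, so $u+t\eta(v,u)=x+\bigl(t_1+t(t_2-t_1)\bigr)\eta(y,x)=x+\bigl((1-t)t_1+tt_2\bigr)\eta(y,x)$, and therefore $f\bigl(u+t\eta(v,u)\bigr)=\varphi\bigl((1-t)t_1+tt_2\bigr)$. Log-convexity of $\varphi$ yields $\varphi\bigl((1-t)t_1+tt_2\bigr)\le\bigl(\varphi(t_1)\bigr)^{1-t}\bigl(\varphi(t_2)\bigr)^{t}=\bigl(f(u)\bigr)^{1-t}\bigl(f(v)\bigr)^{t}$, which is precisely the log-preinvexity of $f$ on $P_{xv}$.

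The only genuinely delicate point is the reduction via Condition~C: one must be careful that the points $z_1,z_2$ (resp. $u,v$) obtained as points of the $\eta$-path are themselves legitimate elements of $K$ at which Condition~C and the defining inequalities apply, and that the identity~(\ref{c}) is used with the arguments in the correct order (note the sign of $t_2-t_1$ is irrelevant since it only rescales the common vector $\eta(y,x)$). Once this bookkeeping is in place, both directions are immediate from the respective definitions, so I expect no further obstacle.
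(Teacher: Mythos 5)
Your proof is correct and follows essentially the same route as the paper: both directions hinge on the identity (\ref{c}) derived from Condition C, which turns $z_{1}+\lambda \eta (z_{2},z_{1})$ into $x+\left( (1-\lambda )t_{1}+\lambda t_{2}\right) \eta (y,x)$ so that the log-preinvexity inequality on $P_{xv}$ and the log-convexity inequality for $\varphi $ translate directly into one another. The only difference is cosmetic (you present the implications in the opposite order and spell out the use of (\ref{c}) slightly more explicitly), so there is nothing to add.
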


\begin{proof}
Suppose that $\varphi $ is log-convex on $\left[ 0,1\right] $ and $%
z_{1}:=x+t_{1}\eta (y,x)\in P_{xv}$, $z_{2}:=x+t_{2}\eta (y,x)\in P_{xv}$.
Fix $\lambda \in \left[ 0,1\right] $. By (\ref{c}), we have%
\begin{eqnarray*}
f\left( z_{1}+\lambda \eta (z_{2},z_{1})\right) &=&f\left( x+\left( \left(
1-\lambda \right) t_{1}+\lambda t_{2}\right) \eta (y,x)\right) \\
&=&\varphi \left( \left( 1-\lambda \right) t_{1}+\lambda t_{2}\right) \\
&\leq &\left[ \varphi \left( t_{1}\right) \right] ^{\left( 1-\lambda \right)
}\left[ \varphi \left( t_{2}\right) \right] ^{\lambda } \\
&=&\left[ f\left( z_{1}\right) \right] ^{\left( 1-\lambda \right) }\left[
f\left( z_{2}\right) \right] ^{\lambda }
\end{eqnarray*}%
Hence, $f$ is log-preinvex with respect to $\eta $ on $\eta $-path $P_{xv}.$

Conversely, let $x,y\in K$ and the function $f$ be log-preinvex with respect
to $\eta $ on $\eta $-path $P_{xv}$. Suppose that $t_{1},t_{2}\in \left[ 0,1%
\right] $. Then, for every $\lambda \in \left[ 0,1\right] $ we have%
\begin{eqnarray*}
\varphi \left( \left( 1-\lambda \right) t_{1}+\lambda t_{2}\right)
&=&f\left( x+\left( \left( 1-\lambda \right) t_{1}+\lambda t_{2}\right) \eta
(y,x)\right) \\
&=&f\left( x+t_{1}\eta (y,x)+\lambda \eta (x+t_{2}\eta (y,x),x+t_{1}\eta
(y,x))\right) \\
&\leq &\left[ f\left( x+t_{1}\eta (y,x)\right) \right] ^{\left( 1-\lambda
\right) }\left[ f\left( x+t_{2}\eta (y,x)\right) \right] ^{\lambda } \\
&=&\left[ \varphi \left( t_{1}\right) \right] ^{\left( 1-\lambda \right) }%
\left[ \varphi \left( t_{2}\right) \right] ^{\lambda }
\end{eqnarray*}%
Therefore, $\varphi $ is log-convex on $\left[ 0,1\right] $.
\end{proof}

The following Teorem is a generalization of Corollary \ref{q}.

\begin{theorem}
\label{qw} Let $K\subseteq 
\mathbb{R}
^{n}$ be an invex set with respect to $\eta :K\times K\rightarrow 
\mathbb{R}
^{n}$ and $f:K\rightarrow 
\mathbb{R}
^{+}$ is a function.\ Suppose that $\eta $ satisfies Condition C on $K.$
Then for every $x,y\in K$ the function $f$ is log-preinvex with respect to $%
\eta $ on $\eta $-path $P_{xv}$. Then, for every $a,b\in \left( 0,1\right) $
with $a<b$ the following inequality holds%
\begin{eqnarray}
&&\left\vert \frac{1}{b-a}\int_{a}^{b}\left( \int_{0}^{s}f\left( x+t\eta
(y,x)\right) dt\right) ds-\int_{0}^{\frac{a+b}{2}}f\left( x+s\eta
(y,x)\right) ds\right\vert  \label{1} \\
&\leq &\left( b-a\right) \left[ \frac{\left[ f\left( x+b\eta (y,x)\right) %
\right] ^{\frac{1}{2}}-\left[ f\left( x+a\eta (y,x)\right) \right] ^{\frac{1%
}{2}}}{\log f\left( x+b\eta (y,x)\right) -\log f\left( x+a\eta (y,x)\right) }%
\right] ^{2}.  \notag
\end{eqnarray}

\begin{proof}
Let $x,y\in K$ and $a,b\in \left( 0,1\right) $ with $a<b$. Since $f$ is
log-preinvex with respect to $\eta $ on $\eta $-path $P_{xv}$ by Proposition %
\ref{eqw} the function $\varphi :\left[ 0,1\right] \rightarrow 
\mathbb{R}
^{+}$ defined by%
\begin{equation*}
\varphi \left( t\right) :=f\left( x+t\eta (y,x)\right) ,
\end{equation*}%
is log-convex on $\left[ 0,1\right] $. Now, we define the function $\phi :%
\left[ 0,1\right] \rightarrow 
\mathbb{R}
^{+}$ as follows%
\begin{equation*}
\phi \left( t\right) :=\int_{0}^{t}\varphi \left( s\right)
ds=\int_{0}^{t}f\left( x+s\eta (y,x)\right) ds.
\end{equation*}%
Obviously for every $t\in \left( 0,1\right) $ we have%
\begin{equation*}
\phi ^{\prime }\left( t\right) =\varphi \left( t\right) =f\left( x+t\eta
(y,x)\right) \geq 0
\end{equation*}%
hence, $\left\vert \phi ^{\prime }\left( t\right) \right\vert =\phi ^{\prime
}\left( t\right) $. Applying Corollary \ref{q} to the function $\phi $
implies that%
\begin{equation*}
\left\vert \frac{1}{b-a}\int_{a}^{b}\phi \left( t\right) dt-\phi \left( 
\frac{a+b}{2}\right) \right\vert \leq \left( b-a\right) \left( \frac{%
\left\vert \phi ^{\prime }(b)\right\vert ^{\frac{1}{2}}-\left\vert \phi
^{\prime }(a)\right\vert ^{\frac{1}{2}}}{\log \left\vert \phi ^{\prime
}(b)\right\vert -\log \left\vert \phi ^{\prime }(a)\right\vert }\right) ^{2}
\end{equation*}%
and we deduce that (\ref{1}) holds.
\end{proof}
\end{theorem}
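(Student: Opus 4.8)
The plan is to reduce Theorem \ref{qw} to Corollary \ref{q} by constructing an appropriate one-variable auxiliary function, exactly as the statement already suggests. First I would fix $x,y\in K$ and $a,b\in(0,1)$ with $a<b$, and invoke Proposition \ref{eqw}: since $f$ is log-preinvex with respect to $\eta$ on the $\eta$-path $P_{xv}$ (here $v=x+\eta(y,x)$), the function $\varphi(t):=f(x+t\eta(y,x))$ is log-convex on $[0,1]$, hence in particular positive and continuous (so certainly integrable). Then I would define the antiderivative $\phi(t):=\int_0^t\varphi(s)\,ds=\int_0^t f(x+s\eta(y,x))\,ds$ on $[0,1]$.

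The key observation is that $\phi$ is differentiable on $(0,1)$ with $\phi'(t)=\varphi(t)=f(x+t\eta(y,x))>0$, so $|\phi'(t)|=\phi'(t)$ and $|\phi'(a)|=\varphi(a)=f(x+a\eta(y,x))$, $|\phi'(b)|=\varphi(b)=f(x+b\eta(y,x))$. Thus $|\phi'|=\phi'$ is itself log-convex on $[0,1]\supseteq[a,b]$ (it equals $\varphi$), so $\phi$ satisfies the hypotheses of Corollary \ref{q} on the interval $[a,b]$ — that is, $\phi$ is differentiable and $|\phi'|$ is log-preinvex (log-convex) there. Applying Corollary \ref{q} to $\phi$ on $[a,b]$ gives
\begin{equation*}
\left\vert \frac{1}{b-a}\int_{a}^{b}\phi(t)\,dt-\phi\!\left(\frac{a+b}{2}\right)\right\vert \leq (b-a)\left(\frac{|\phi'(b)|^{1/2}-|\phi'(a)|^{1/2}}{\log|\phi'(b)|-\log|\phi'(a)|}\right)^{2}.
\end{equation*}

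Finally I would unwind the definitions. On the left, $\int_a^b\phi(t)\,dt=\int_a^b\left(\int_0^s f(x+t\eta(y,x))\,dt\right)ds$ and $\phi\!\left(\frac{a+b}{2}\right)=\int_0^{(a+b)/2} f(x+s\eta(y,x))\,ds$; on the right, substituting $|\phi'(a)|=f(x+a\eta(y,x))$ and $|\phi'(b)|=f(x+b\eta(y,x))$ yields exactly inequality (\ref{1}). There is no real obstacle here — the only points requiring a word of care are (i) checking that the hypotheses of Corollary \ref{q} genuinely apply to $\phi$, i.e. that "$|\phi'|$ log-preinvex'' is met via the identification with the log-convex $\varphi$ (for the single-variable case log-preinvex with $\eta(b,a)=b-a$ is just log-convex), and (ii) the tacit assumption that $f(x+b\eta(y,x))\neq f(x+a\eta(y,x))$ so that the denominators are nonzero, with the degenerate equal case handled by the usual limiting convention $\frac{u^{1/2}-u^{1/2}}{\log u-\log u}\to \tfrac12 u^{-1/2}\cdot\text{(finite)}$, matching $\left(\frac{|\phi'(b)|^{1/2}-|\phi'(a)|^{1/2}}{\log|\phi'(b)|-\log|\phi'(a)|}\right)^2\to |\phi'(a)|$ in the limit, consistent with the constant case. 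Both are routine; the proof is essentially a bookkeeping exercise once Proposition \ref{eqw} is in hand.
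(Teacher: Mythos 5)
Your proposal is correct and follows essentially the same route as the paper's own proof: reduce to Corollary \ref{q} via Proposition \ref{eqw} by setting $\varphi(t)=f(x+t\eta(y,x))$ and $\phi(t)=\int_0^t\varphi(s)\,ds$, use $|\phi'|=\phi'=\varphi$, and unwind the definitions. Your extra remarks on verifying the hypotheses of Corollary \ref{q} and on the degenerate case $f(x+a\eta(y,x))=f(x+b\eta(y,x))$ are sensible refinements the paper leaves implicit.
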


\begin{remark}
\label{ws} Let $\varphi \left( t\right) :\left[ 0,1\right] \rightarrow 
\mathbb{R}
^{+}$ be a function and $q$ a positive real number, then $\varphi $ is
log-convex if and only if the function $\varphi ^{q}\left( t\right) :\left[
0,1\right] \rightarrow 
\mathbb{R}
^{+}$ is log-convex. \.{I}ndeed for every $x,y\in \left[ 0,1\right] $ it is
easy to see that%
\begin{equation*}
\left[ \left[ \varphi \left( x\right) \right] ^{1-t}\left[ \varphi \left(
y\right) \right] ^{t}\right] ^{q}=\left[ \varphi ^{q}\left( x\right) \right]
^{1-t}\left[ \varphi ^{q}\left( y\right) \right] ^{t}
\end{equation*}%
Therefore if $t\in \left[ 0,1\right] ,$ we have

$\varphi \left( tx+(1-t)y\right) \leq \left[ \varphi \left( x\right) \right]
^{1-t}\left[ \varphi \left( y\right) \right] ^{t}$ if and only if $\varphi
^{q}\left( tx+(1-t)y\right) \leq \left[ \varphi ^{q}\left( x\right) \right]
^{1-t}\left[ \varphi ^{q}\left( y\right) \right] ^{t}$.
\end{remark}

The following Theorem is a generalization Corollory \ref{q1} to functions
several variables.

\begin{theorem}
Let $K\subseteq 
\mathbb{R}
^{n}$ be an invex set with respect to $\eta :K\times K\rightarrow 
\mathbb{R}
^{n}$ and $f:K\rightarrow 
\mathbb{R}
^{+}$ is a function.\ Suppose that $\eta $ satisfies condition C on $K.$
Then for every $x,y\in K$ the function $f$ is log-preinvex with respect to $%
\eta $ on $\eta $-path $P_{xv}$. Then, for every $p>1$ and $a,b\in \left(
0,1\right) $ with $a<b$ the following inequality holds%
\begin{eqnarray}
&&\left\vert \frac{1}{b-a}\int_{a}^{b}\left( \int_{0}^{s}f\left( x+t\eta
(y,x)\right) dt\right) ds-\int_{0}^{\frac{a+b}{2}}f\left( x+s\eta
(y,x)\right) ds\right\vert  \label{2} \\
&\leq &(b-a)\left[ \frac{\left[ f\left( x+a\eta (y,x)\right) \right] ^{\frac{%
1}{2}}}{2^{\frac{1}{p}}\left( p+1\right) ^{\frac{1}{p}}q^{\frac{1}{q}}}%
\left( \frac{\left[ f\left( x+b\eta (y,x)\right) \right] ^{\frac{q}{2}}-%
\left[ f\left( x+a\eta (y,x)\right) \right] ^{\frac{q}{2}}}{\log f\left(
x+b\eta (y,x)\right) -\log f\left( x+a\eta (y,x)\right) }\right) ^{\frac{1}{q%
}}\right]  \notag
\end{eqnarray}

where $\frac{1}{p}+\frac{1}{q}=1$.

\begin{proof}
Let $x,y\in K$ and $a,b\in \left( 0,1\right) $ with $a<b$.\ Suppose that $%
\phi $ and $\varphi $ are the functions whixh are defined in the Theorem \ref%
{qw}. Since $\left\vert \phi ^{\prime }\right\vert :\left[ 0,1\right]
\rightarrow 
\mathbb{R}
^{+}$ is log-convex on $\left[ 0,1\right] $, by Remark \ref{ws} the function 
$\left\vert \phi ^{\prime }\right\vert ^{q}$ is also is log-convex on $\left[
0,1\right] $. Now, by applying Corollary \ref{q1} to function $\phi $ we get%
\begin{eqnarray*}
&&\left\vert \frac{1}{b-a}\int_{a}^{b}\phi (x)dx-\phi \left( \frac{a+b}{2}%
\right) \right\vert \\
&\leq &(b-a)\left[ \frac{\left\vert \phi ^{\prime }(a)\right\vert ^{\frac{1}{%
2}}}{2^{\frac{1}{p}}\left( p+1\right) ^{\frac{1}{p}}q^{\frac{1}{q}}}\left( 
\frac{\left\vert \phi ^{\prime }(b)\right\vert ^{\frac{q}{2}}-\left\vert
\phi ^{\prime }(a)\right\vert ^{\frac{q}{2}}}{\log \left\vert \phi ^{\prime
}(b)\right\vert -\log \left\vert \phi ^{\prime }(a)\right\vert }\right) ^{%
\frac{1}{q}}\right]
\end{eqnarray*}%
and we deduce that (\ref{2}) holds. The proof is complete.
\end{proof}
\end{theorem}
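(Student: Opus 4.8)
The plan is to mirror the argument used in the proof of Theorem \ref{qw}, replacing the appeal to Corollary \ref{q} by the $L^{p}$-type estimate in Corollary \ref{q1}. Fix $x,y\in K$ and $a,b\in(0,1)$ with $a<b$, and write $v:=x+\eta(y,x)$. Since $\eta$ satisfies Condition C and $f$ is log-preinvex with respect to $\eta$ on the $\eta$-path $P_{xv}$, Proposition \ref{eqw} tells us that the one-variable function $\varphi:[0,1]\to\mathbb{R}^{+}$ given by $\varphi(t):=f(x+t\eta(y,x))$ is log-convex on $[0,1]$. This is the step that collapses the several-variable problem to the one-dimensional situation already treated.

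Next I would introduce the antiderivative $\phi:[0,1]\to\mathbb{R}^{+}$ defined by $\phi(t):=\int_{0}^{t}\varphi(s)\,ds=\int_{0}^{t}f(x+s\eta(y,x))\,ds$, exactly as in Theorem \ref{qw}. Then $\phi$ is differentiable on $(0,1)$ with $\phi'(t)=\varphi(t)=f(x+t\eta(y,x))\ge 0$, so $\left\vert\phi'(t)\right\vert=\phi'(t)=\varphi(t)$. In particular $\left\vert\phi'\right\vert$ is log-convex on $[0,1]$, and hence by Remark \ref{ws} the function $\left\vert\phi'\right\vert^{q}$ is log-convex on $[0,1]$ as well. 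Thus $\phi$ satisfies all the hypotheses needed to apply Corollary \ref{q1} on the interval $[a,b]\subset(0,1)$.

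Applying Corollary \ref{q1} to $\phi$ then gives
\begin{equation*}
\left\vert \frac{1}{b-a}\int_{a}^{b}\phi(t)\,dt-\phi\left(\frac{a+b}{2}\right)\right\vert \le (b-a)\left[\frac{\left\vert \phi'(a)\right\vert^{\frac{1}{2}}}{2^{\frac{1}{p}}(p+1)^{\frac{1}{p}}q^{\frac{1}{q}}}\left(\frac{\left\vert \phi'(b)\right\vert^{\frac{q}{2}}-\left\vert \phi'(a)\right\vert^{\frac{q}{2}}}{\log\left\vert \phi'(b)\right\vert-\log\left\vert \phi'(a)\right\vert}\right)^{\frac{1}{q}}\right].
\end{equation*}
It then remains only to unwind the definitions: substituting $\phi'(a)=f(x+a\eta(y,x))$ and $\phi'(b)=f(x+b\eta(y,x))$ on the right-hand side, and $\phi(t)=\int_{0}^{t}f(x+s\eta(y,x))\,ds$ together with $\phi\left(\frac{a+b}{2}\right)=\int_{0}^{(a+b)/2}f(x+s\eta(y,x))\,ds$ on the left-hand side, turns the last display into precisely (\ref{2}).

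The argument is essentially bookkeeping once the reduction to one variable is in place; the only point that needs genuine care is the verification that $\phi$ meets the hypotheses of Corollary \ref{q1}, namely that $\left\vert\phi'\right\vert^{q}$ is log-convex on $[a,b]$. This is where I would be most careful: it relies on the chain Proposition \ref{eqw} (to get $\varphi$ log-convex) followed by Remark \ref{ws} (to pass from $\left\vert\phi'\right\vert$ to $\left\vert\phi'\right\vert^{q}$), and it uses crucially that $a,b\in(0,1)$, so that the whole interval $[a,b]$ lies in $[0,1]$ where $\varphi$ is defined and log-convex and where $\phi'=\varphi>0$, making the absolute values harmless.
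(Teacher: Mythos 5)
Your proposal is correct and follows essentially the same route as the paper's own proof: reduce to one variable via Proposition \ref{eqw}, form the antiderivative $\phi$ as in Theorem \ref{qw}, use Remark \ref{ws} to get log-convexity of $\left\vert \phi ^{\prime }\right\vert ^{q}$, and apply Corollary \ref{q1} to $\phi$. Your write-up is merely more explicit about re-deriving the properties of $\varphi$ and $\phi$ and about unwinding the substitutions at the end, which the paper leaves implicit by citing Theorem \ref{qw}.
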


\end{document}